\newtheorem{defn}{Definition}[section]
\newtheorem{prop}[defn]{Proposition}
\newtheorem{theorem}[defn]{Theorem}
\newtheorem{example}[defn]{Example}
\newcommand{\R}{\mathbb{R}}
\newcommand{\E}{\mathbb{E}}
\newcommand{\Prob}{\mathbb{P}}
\DeclareRobustCommand{\pder}[1]{%
  \@ifnextchar\bgroup{\@pder{#1}}{\@pder{}{#1}}}
\newcommand{\@pder}[2]{\frac{\partial#1}{\partial#2}}
\begin{document}

\title{The frog model with drift on $\R$}

\author{Josh Rosenberg}
\date{}
\maketitle

    \begin{abstract}Consider a Poisson process on $\R$ with intensity $f$ where $0 \leq f(x)<\infty$ for ${x}\geq 0$ and ${f(x)}=0$ for $x<0$.  The ``points" of the process represent sleeping frogs.  In addition, there is one active frog initially located at the origin.  At time ${t}=0$ this frog begins performing Brownian motion with leftward drift $\lambda$ (i.e. its motion is a random process of the form ${B}_{t}-\lambda {t}$).  Any time an active frog arrives at a point where a sleeping frog is residing, the sleeping frog becomes active and begins performing Brownian motion with leftward drift $\lambda$, independently of the motion of all of the other active frogs.  This paper establishes sharp conditions on the intensity function $f$ that determine whether the model is transient (meaning the probability that infinitely many frogs return to the origin is 0), or non-transient (meaning this probability is greater than 0).  A discrete model with $\text{Poiss}(f(n))$ sleeping frogs at positive integer points (and where activated frogs perform biased random walks on $\mathbb{Z}$) is also examined.  In this case as well, we obtain a similar sharp condition on $f$ corresponding to transience of the model.\end{abstract}

    \section{Introduction}
    
    Since its inception, the term frog model has referred to a system of interacting random walks on a rooted graph.  Specifically, it begins with one active frog at the root and sleeping frogs distributed among the non-root vertices, where the number of sleeping frogs at each non-root vertex are independent random variables (not necessarily identically distributed).  The active frog performs a discrete-time random walk on the graph.  Any time an active frog lands on a vertex containing a sleeping frog, the sleeping frog wakes up and begins performing its own discrete-time random walk (independent of those of the other active frogs).  A variety of different versions of the frog model have been looked at including on the infinite d-ary tree $\mathbb{T}_d$ with one sleeping frog per vertex \cite{HJJ2}, on $\mathbb{T}_d$ with i.i.d. Poisson many sleeping frogs per vertex \cite{HJJ1}, and on $\mathbb{Z}^d$ with one sleeping frog per vertex \cite{TW}.  One of the fundamental questions explored in all of these instances has involved asking if the system is recurrent or transient w.r.t. frogs visiting the root.  This is the form of the question addressed in the present work.
    
    The particular version of the frog model that inspired this paper was the frog model with drift on $\mathbb{Z}$.  In this version of the model the activated frogs perform random walks with some positive leftward drift on the integers, and the numbers of sleeping frogs at each vertex (aside from the origin which begins with a single activated frog) are i.i.d. random variables.  In \cite{Gantert} Nina Gantert and Philipp Schmidt establish tight conditions on the distribution function for the number of sleeping frogs per vertex, that determine whether the system is recurrent or transient (i.e. whether the probability the origin is visited by infinitely many frogs is equal to 1 or 0 respectively).  Specifically, they prove that if $\eta$ refers to a nonnegative integer valued random variable that has the same distribution as the number of sleeping frogs at any nonzero vertex, then$$\Prob_{\eta}(\text{the origin is visited i.o.})=\left\{\begin{array}{ll}0 &\text{if }\E[\text{log}^+\eta]<\infty\\1 &\text{if }\E[\text{log}^+\eta]=\infty\end{array}\right.$$(Note that this result does not depend on the particular value of the leftward drift.  It is only required that the leftward drift be positive).
    
    \medskip
    \noindent
    {\bf The frog model with drift on $\R$.} Here the first subject of study will be a continuous analogue of the model looked at by Gantert and Schmidt.  Start with an active frog at the origin that begins performing Brownian motion with leftward drift $\lambda>0$.  The sleeping frogs all reside to the right of the origin according to a Poisson process with intensity $f:[0, \infty)\rightarrow [0, \infty)$.  Any time an active frog hits a sleeping frog, the sleeping frog wakes up and also begins performing Brownian motion with leftward drift $\lambda$, independent of that of the other active frogs (see Figure \ref{fig:Model} for an illustration).  A more formal construction of this process will not be needed, as most of the analysis involves a related and easily constructed birth-death process.  This paper establishes sharp conditions for the Poisson intensity function $f$, distinguishing between transience (meaning the probability that the origin is hit by infinitely many different frogs is 0), and non-transience (meaning this probability is greater than 0).
    
    \bigskip
    
    \begin{figure}[H]
    \centering
    \includegraphics{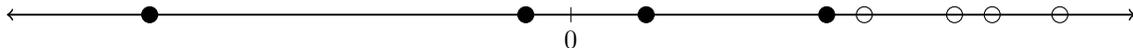}

    \caption{A depiction of the model, where black circles are active frogs \\ and white circles are sleeping frogs.}
    \label{fig:Model}
    
    \end{figure}
    
  \medskip 
    \noindent The main result for the frog model with drift on $\R$ will be the following theorem.  It is assumed here, as well as in the discrete case, that $f$ is not the zero function.
    
  \begin{theorem}\label{theorem:neat3}
  For any $\lambda>0$ and $f$ monotonically increasing, the frog model with drift on $\R$ is transient if and only if\begin{equation}\label{int-cond}\int_0^{\infty}e^{-\frac{f(t)}{2\lambda}}f(t)dt=\infty\end{equation}
  \end{theorem}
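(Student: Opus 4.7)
The plan is to reduce the frog model to a position-indexed birth-death process and then extract the integral condition from its killing hazard. For each Poisson point $y_i$ (and for the initial frog at $y_0=0$) attach an independent $E_i\sim\mathrm{Exp}(2\lambda)$, representing the supremum of the associated Brownian motion with drift $-\lambda$ started from $y_i$, so that the frog's range is $[y_i,y_i+E_i]$. The activated frogs are exactly those in the connected component of $y_0$ in the interval graph on these ranges. Letting $N_x$ denote the number of activated ranges containing $x$, the process $(N_x)_{x\geq 0}$ is a birth-death chain on $\mathbb{Z}_{\geq 0}$ with $N_0=1$, birth rate $f(x)\mathbf{1}(N_x\geq 1)$, and death rate $2\lambda N_x$ (by memorylessness of each $E_i$). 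The frog model is transient iff $N_x$ is absorbed at $0$ almost surely, i.e.\ iff $T_0:=\inf\{x:N_x=0\}<\infty$ a.s.

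State $0$ can only be entered through a $1\to 0$ jump at rate $2\lambda$, so writing $p(s)=\Prob(T_0>s)$ and $h(s)=2\lambda\Prob(N_s=1\mid T_0>s)$, the identity $-p'(s)=2\lambda\Prob(N_s=1)=p(s)h(s)$ gives $p(\infty)=\exp\bigl(-\int_0^\infty h(s)\,ds\bigr)$. Non-transience is therefore equivalent to $\int_0^\infty h<\infty$, and it suffices to show that $h(x)$ is comparable to $f(x)e^{-f(x)/(2\lambda)}$ up to multiplicative constants that do not affect convergence. The heuristic reason is that the auxiliary process $\tilde N$, defined like $N$ but with unconditional birth rate $f(x)$, is a time-inhomogeneous $M/M/\infty$ queue whose marginal law is $\mathrm{Poisson}(\mu(x))$ with $\mu(x)=\int_0^x e^{-2\lambda(x-s)}f(s)\,ds+e^{-2\lambda x}$, and $\tilde N$ is coupled to agree with $N$ up to extinction. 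When $f$ is monotonically increasing, $\mu(x)$ and $f(x)/(2\lambda)$ differ by at most a bounded factor, so $\mu(x)e^{-\mu(x)}\asymp (f(x)/(2\lambda))e^{-f(x)/(2\lambda)}$, and if $\Prob(N_x=1\mid T_0>x)\asymp\mu(x)e^{-\mu(x)}$, then $h(x)\asymp f(x)e^{-f(x)/(2\lambda)}$ as desired.

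To make this rigorous I would partition $[0,\infty)$ into blocks on which $f$ varies by at most a fixed multiplicative constant (possible by monotonicity) and compare $N$ on each block with the time-homogeneous birth-death process whose birth rate is the local value of $f$ and whose stationary distribution is the explicit $\mathrm{Poisson}(f/(2\lambda))$. Mixing/coupling within each block justifies the quasi-stationary approximation and yields two-sided bounds on $h$; summing over blocks recovers the integral characterization. The main obstacle I anticipate is establishing $\Prob(N_x=1\mid T_0>x)\asymp\mu(x)e^{-\mu(x)}$ uniformly in $x$, since the conditional law is not literally Poisson and the process may not have fully relaxed to quasi-equilibrium at the boundaries between blocks. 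The monotonicity of $f$ is essential here, both to ensure explicit block-wise comparisons and to control the error in the transition regions so that it does not alter which side of the dichotomy \eqref{int-cond} the integral falls on.
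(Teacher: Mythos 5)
Your reduction to the position-indexed birth--death process (ranges $[y_i,y_i+E_i]$ with $E_i\sim\mathrm{Exp}(2\lambda)$, birth rate $f(x)\mathbf{1}(N_x\geq 1)$, death rate $2\lambda N_x$, transience iff absorption at $0$) is exactly the paper's starting point, and the identity $p(\infty)=\exp(-\int_0^\infty h)$ is a legitimate reformulation. The proof breaks down, however, at the step ``$\mu(x)$ and $f(x)/(2\lambda)$ differ by at most a bounded factor, so $\mu(x)e^{-\mu(x)}\asymp (f(x)/(2\lambda))e^{-f(x)/(2\lambda)}$.'' A bounded \emph{multiplicative} discrepancy between the exponents does not give comparability of $e^{-\mu(x)}$ and $e^{-f(x)/(2\lambda)}$, and the theorem is sensitive to precisely this: the critical case is $f(t)\sim 2\lambda\log t$, where multiplying the exponent by any constant $c\neq 1$ flips which side of the dichotomy \eqref{int-cond} you land on. (The premise is also false as stated: $\mu(x)\leq f(x)/(2\lambda)$ always, but there is no matching lower bound for general increasing $f$ --- consider $x$ just after a large jump of $f$, or $f(t)=e^{t^2}$, where $\mu(x)/f(x)\to 0$.) What is actually needed, and what the paper proves, is an \emph{additive} control: writing $\lambda_t=\int_0^t e^{-(t-u)}f(u)\,du$ (for $\lambda=\tfrac12$), one has the exact relation $f(t)=\lambda_t+\lambda_t'$ a.e., and the substitution $x=\lambda_t$ shows $\int_0^\infty e^{-\lambda_t}\lambda_t'\,dt=\int_0^\infty e^{-\lambda_t}\lambda_t\lambda_t'\,dt=1$; splitting on $\{\lambda_t'\leq 1\}$ then converts $\int e^{-\lambda_t}f(t)\,dt$ into $\int e^{-f(t)}f(t)\,dt$ up to a bounded additive error. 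Your block decomposition, which only tracks $f$ up to fixed multiplicative constants, cannot recover this and would not distinguish $f=2\lambda\log(t+1)$ (transient) from $f=3\lambda\log(t+1)$ (non-transient).

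The second gap is the one you flag yourself: $h(x)$ involves $\Prob(N_x=1\mid T_0>x)$, a quasi-stationary conditional law of the \emph{absorbed} process, which is not the Poisson marginal of the unabsorbed process $\tilde N$ --- conditioning on never having hit $0$ biases the path upward, and no argument is offered that the bias is harmless uniformly in $x$. The paper sidesteps this entirely: it works with the unabsorbed process throughout, computes ${\bf E}[V]=\int_0^\infty \Prob(Y_t=0)f(t)\,dt$ for the number $V$ of $0\to 1$ jumps (an unconditional, explicitly Poisson-type quantity), and then closes the loop with a separate zero--one-type step (${\bf E}[V]=\infty\Rightarrow\Prob(V=\infty)=1$) proved by a monotone coupling in the starting time, which uses the monotonicity of $f$. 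I would suggest replacing the hazard-rate/quasi-stationary scheme with this ``expected number of returns to $0$ plus monotone coupling'' scheme, and replacing the block comparison with the change-of-variables identity above.
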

  
  \bigskip Before discussing the proof of Theorem \ref{theorem:neat3} we make the following observations about its consequences.  From \eqref{int-cond} it follows that $f(t)=2\lambda\text{log}(t+1)$ represents a critical case with respect to transience vs. non-transience, in the sense that for $f(t)=C\text{log}(t+1)$ a value of $C>2\lambda$ implies non-transience and $C<2\lambda$ implies transience.  More delicate examples include $f(t)=2\lambda\text{log}(t+1)$ (transience), $f(t)=2\lambda\text{log}(t+1)+4\lambda\text{log}\text{log}(t+e)$ (transience), and $f(t)=2\lambda\text{log}(t+1)+(4+\epsilon)\lambda\text{log}\text{log}(t+e)$ for $\epsilon>0$ (non-transience).
  
The proof of Theorem \ref{theorem:neat3} proceeds as follows.  Note first that by virtue of a simple rescaling, it suffices to prove the theorem for the specific case $\lambda=\frac{1}{2}$.  A continuous-time-inhomogeneous birth-death process $\left\{X_t\right\}$ is defined with birth rate $f(t)1_{(X_t>0)}$, and death rate $X_t$.  Transience for the frog model with drift $\frac{1}{2}$ and Poisson intensity of sleeping frogs $f$, is shown to coincide with $X_t$ eventually arriving at the absorbing state 0 with probability 1.  A related process $\left\{Y_t\right\}$ is then defined, which is identical to $\left\{X_t\right\}$ except that 0 is not an absorbing state (i.e. $\left\{Y_t\right\}$ has birth rate $f(t)$ and death rate $Y_t$).  The primary task in proving Theorem \ref{theorem:neat3} consists of proving Theorem \ref{theorem:neat2}, which says that $\left\{Y_t\right\}$ will jump from 0 to 1 infinitely often with probability 1, if and only if the integral expression in \eqref{int-cond} (for $\lambda =\frac{1}{2}$) diverges.  To achieve this, it is first shown that as $t\rightarrow\infty$ the distribution of $Y_t$ behaves increasingly like that of a Poisson r.v. with mean $\int_0^t e^{-(t-u)}f(u)du$.  This is then used to show that the expected number of jumps from 0 to 1 (made by $\left\{Y_t\right\}$) is infinite if and only if the integral expression in \eqref{int-cond} diverges.  Together with a proof that this quantity is infinite with probability 1 as long as it has infinite expectation, this is sufficient for establishing Theorem \ref{theorem:neat2}.  After attending to all of these details in Section \ref{ss:process Yt}, in Section \ref{ss:pt11} it is shown that Theorem \ref{theorem:neat3} follows easily from Theorem \ref{theorem:neat2}.

 \bigskip
 \noindent $Remark\ 1.$ Note that the decision to restrict our focus to the case where no sleeping frogs reside to the left of the origin was not made in order to simplify the problem.  In fact, if the domain of $f$ is expanded to $(-\infty, \infty)$ and it is allowed to take positive values to the left of the origin, then provided$$\int_{-\infty}^0 e^{2\lambda t}f(t)dt<\infty\ ,$$the transience/non-transience of the model depends on the same integral condition from Theorem \ref{theorem:neat3}.  This follows from the theorem, along with the fact that $\E[L_{(-\infty,0)}]$ (where $L_{(a,b)}$ denotes the number of distinct frogs originating in $(a,b)$ that hit the origin) is equal to the above integral.  Alternatively, because $L_{(a,b)}$ (for $b\leq 0$) has a Poisson distribution with mean $\int_a^b e^{2\lambda t}f(t)dt$, divergence of the improper integral above will imply that $L_{(-\infty,0)}$ dominates a Poisson r.v. of any finite mean, thus implying recurrence of the model.   

\medskip
\noindent
{\bf The non-uniform frog model with drift on $\mathbb{Z}$.} After establishing Theorem \ref{theorem:neat3}, we shift our focus towards a discrete model where activated frogs perform simple random walks with leftward drift on $\mathbb{Z}$.  The sleeping frogs are distributed among the positive integer vertices where, for each $j\geq 1$, $\eta_j$ will represent the number of sleeping frogs at $x=j$ at time $t=0$.  The $\eta_j$'s are to be independent Poisson random variables with ${\bf E}[\eta_j]=f(j)$ for some function $f:\mathbb{Z}^+\rightarrow [0,\infty)$.  The process begins with a single active frog at the root and, once activated, frogs perform random walks (independently of each other) which at each step move one unit to the left with probability $p$
(where $\frac{1}{2}<p<1$) and one unit to the right with probability $1-p$.  This model will be referred to as the $\textit{non-uniform frog model with drift on } \mathbb{Z}$ (so as to distinguish it from the model looked at by Gantert and Schmidt).  The terms ``transience" and ``non-transience" will have the same meaning with respect to this model that they had for the model on $\R$.  Once again, a more formal construction will not be necessary since the analysis primarily involves a related discrete-time-inhomogeneous Markov process.  Our main result concerning the model just described will be Theorem \ref{theorem:neat5} (see below), which gives a sharp condition on $f$, distinguishing between transience and non-transience of the model (the condition will also depend on the value of $p$).

Despite their superficial similarities, the non-uniform frog model with drift on $\mathbb{Z}$ and the model looked at by Gantert and Schmidt in \cite{Gantert} are qualitatively quite different.  These differences can be illustrated by noting the contrast between Theorem \ref{theorem:neat5} (below), and the Gantert and Schmidt result discussed earlier.  For the model looked at in \cite{Gantert} in which the $\eta_j$'s are i.i.d. random variables (for all $j\in\mathbb{Z}\backslash\left\{0\right\}$), the sharp condition distinguishing between transience and recurrence involves the exponential tail of the distribution of $\eta_1$.  By contrast, for the non-uniform frog model with drift on $\mathbb{Z}$ in which the $\eta_j$'s are $\text{Poiss}(f(j))$ (for $j\in\mathbb{Z}^+$), the sharp condition distinguishing between transience and (in this case) non-transience involves the asymptotic behavior of $f$.  Note also that unlike with the result in \cite{Gantert}, the particular value of $p$ is taken into account.

Another model which bears a (perhaps stronger) resemblance to the non-uniform model on $\mathbb{Z}$ was examined by Bertacchi, Machado, and Zucca in \cite{BMZ}, where they looked at a frog model on $\mathbb{Z}$ that begins with an active frog at the origin and a single sleeping frog at each positive integer point, and where the frog originating at $i$, upon activation, performs an asymmetric random walk that goes left with probability $p_i$ and right with probability $1-p_i$ (i.e. the drift value can vary depending on the particular frog).  Among the results in their paper, the two that were perhaps the most relevant to my work here, entailed establishing conditions for the individual $p_i$ values (when they are all greater than $\frac{1}{2}$) that guarantee local survival in one case, and local extinction in the other (referred to in this work as non-transience and transience respectively).  The first result (Proposition 2.5 in their paper) states that if there exists a strictly increasing sequence of nonnegative integers $\left\{n_k\right\}$ such that$$\sum_{k=0}^{\infty}\prod_{i=0}^{n_k}\Bigg(1-\Big(\frac{1-p_i}{p_i}\Big)^{n_{k+1}-i}\Bigg)<\infty$$then there is local survival (i.e. non-transience).  For the second result (Proposition 2.10) they show that if $\underset{n\rightarrow\infty}{\text{liminf}}\ p_n>\frac{1}{2}$, then there is local extinction (i.e. transience).  

Perhaps the most pertinent similarity between the model in \cite{BMZ} and the non-uniform frog model with drift on $\mathbb{Z}$ relates to the fact that the long term behavior of both models depends on the asymptotic behavior of sequences ($\left\{p_n\right\}$ in the first case and $f(n)$ in the latter).  A notable difference, however, lies in the fact that the varying parameter in the first case is the drift of individual frogs, and in the second it is the Poisson mean of the distribution of the number of sleeping frogs initially positioned at positive integer points.  Due to this difference, activated frogs in the former case cannot be treated as interchangeable, as they are in the latter.  Consequently, the methods used in \cite{BMZ} are somewhat different from those employed here, and do not result in tight conditions distinguishing between local survival and local extinction being established, such as those we now present for the non-uniform model.

\begin{theorem}\label{theorem:neat5}
For $\frac{1}{2}<p<1$ and $f$ monotonically increasing, the non-uniform frog model with drift on $\mathbb{Z}$ is transient if and only if\begin{equation}\label{sum-inf}\sum_{j=1}^{\infty}e^{-\frac{1-p}{2p-1}f(j)}=\infty\end{equation}
\end{theorem}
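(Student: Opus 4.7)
The plan is to follow the strategy used for Theorem \ref{theorem:neat3}, recasting the question in terms of a discrete-time inhomogeneous birth-death chain. Writing $q = 1 - p$, the probability that a biased walk (leftward probability $p > 1/2$) started at position $k$ ever visits $k+1$ is $q/p$ by classical gambler's ruin. Let $X_0 = 1$ and, for $n \geq 0$, define
$$X_{n+1} = B_{n+1} + \eta_{n+1}\, \mathbf{1}_{\{B_{n+1} \geq 1\}}, \qquad B_{n+1} \mid X_n \sim \mathrm{Bin}(X_n,\, q/p),$$
with $\eta_{n+1} \sim \mathrm{Poiss}(f(n+1))$ independent of the past and $0$ absorbing. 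I would interpret $X_n$ as the number of activated frogs whose random walks have at some point reached position $n$: the per-frog thinning factor $q/p$ comes from the strong Markov property at the hitting time of $n$, and the Poisson input captures the frogs newly activated at position $n+1$, which occurs precisely when at least one old frog extends its maximum to $n+1$. A coupling argument modeled on the continuous case shows that the frog model is transient if and only if $\{X_n\}$ is absorbed at $0$ almost surely.

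Define the non-absorbing analog $Y_0 = 1$, $Y_{n+1} = B'_{n+1} + \eta_{n+1}$ with $B'_{n+1} \mid Y_n \sim \mathrm{Bin}(Y_n,\, q/p)$. A pleasant feature of the discrete setting, not available in the continuous case, is that iterating Poisson thinning together with closure of the Poisson class under independent summation yields the exact identity in law
$$Y_n \;\stackrel{d}{=}\; \mathrm{Bern}\bigl((q/p)^n\bigr) \,+\, \mathrm{Poiss}(\mu_n), \qquad \mu_n := \sum_{i=1}^{n} f(i)\,(q/p)^{n-i},$$
with the two summands independent; no asymptotic Poisson approximation is required.

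Next I would identify the event on $Y$ which, under the natural coupling of $X$ and $Y$, corresponds to absorption of $X$. Let $A_n = \{B'_n = 0\}$; the first $n$ at which $A_n$ holds is precisely the absorption time of $X$, so the frog model is transient if and only if $\Prob\bigl(\bigcup_n A_n\bigr) = 1$. Using the Poisson m.g.f.,
$$\Prob(A_n) = \E\bigl[(1 - q/p)^{Y_{n-1}}\bigr] = \bigl(1 - (q/p)^n\bigr)\,\exp\!\Bigl(-\tfrac{q}{p}\,\mu_{n-1}\Bigr).$$
For $f$ monotonically increasing and slowly enough varying that $f(n-k)/f(n) \to 1$ for each fixed $k$ (the only regime in which divergence of \eqref{sum-inf} is at issue), dominated convergence in the defining sum yields $\mu_{n-1} \sim \tfrac{p}{p-q}\, f(n)$, and hence $\tfrac{q}{p}\mu_{n-1} \sim \tfrac{q}{p-q}\, f(n) = \tfrac{1-p}{2p-1}\, f(n)$. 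Thus $\sum_n \Prob(A_n) = \infty$ if and only if the series in \eqref{sum-inf} diverges.

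The main obstacle is the passage from the summability statement to $\Prob\bigl(\bigcup_n A_n\bigr) = 1$ (or its complement), since the $A_n$ are correlated through the Markov chain $Y$. For the non-transient side (sum convergent), an FKG-type positive-correlation inequality for $Y$ should yield $\Prob\bigl(A_n^c \mid A_1^c \cap \cdots \cap A_{n-1}^c\bigr) \geq \Prob(A_n^c)$, so that $\Prob\bigl(\bigcap_n A_n^c\bigr) \geq \prod_n \bigl(1 - \Prob(A_n)\bigr) > 0$. For the transient side (sum divergent), the author's continuous-case strategy of passing from infinite expectation to almost-sure infinite occurrence adapts via a conditional Borel-Cantelli or Paley-Zygmund argument, with pairwise correlations $\Prob(A_m \cap A_n)$ controlled by exploiting the $(q/p)^{n-m}$-thinning in the dependence of $Y_n$ on $Y_m$. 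Pinning down these correlation estimates quantitatively is the technical heart of the proof.
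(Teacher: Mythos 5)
Your setup is essentially the paper's: the chain you call $X_n$ is a reparametrization of the process $\{M_j\}$ of Proposition \ref{prop:neat6} (the paper thins the site-$j$ Poisson input by $\frac{1-p}{p}$ before including it, so its $\tau_j$ equals your $\frac{q}{p}\mu_{j-1}$), your exact Bernoulli-plus-Poisson law for the unabsorbed chain is \eqref{bern}, and your formula for $\Prob(A_n)$ agrees with ${\bf P}^*(N_j=0)=\big(1-\big(\frac{1-p}{p}\big)^j\big)e^{-\tau_j}$. The first genuine gap is in the comparison of $\sum_n e^{-\frac{q}{p}\mu_{n-1}}$ with $\sum_n e^{-\frac{1-p}{2p-1}f(n)}$. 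You restrict to $f$ with $f(n-k)/f(n)\to 1$ and declare this ``the only regime in which divergence is at issue''; that is not a legitimate reduction (a monotone increasing $f$ need not satisfy it --- consider $f$ constant on long blocks with occasional large jumps --- and the theorem is an equivalence that must be proved for all monotone $f$). Moreover, even where the asymptotic $\frac{q}{p}\mu_{n-1}\sim\frac{1-p}{2p-1}f(n)$ holds, asymptotic equivalence of exponents does not transfer convergence or divergence of series of the form $\sum e^{-a_n}$; one needs quantitative one-sided control. The paper obtains the divergence direction from the exact inequality $\tau_j\le\frac{1-p}{2p-1}f(j)$ (a consequence of monotonicity alone), and the convergence direction from the exact identity $\frac{1-p}{2p-1}f(j)=\tau_j+\frac{p}{2p-1}\Delta\tau_j$ together with splitting the sum over $\{\Delta\tau_j\le 1\}$ (where the factor $e^{-\frac{p}{2p-1}\Delta\tau_j}$ is bounded below) and $\{\Delta\tau_j>1\}$ (where $\sum e^{-\tau_j}$ is controlled by a geometric series). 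This step, Proposition \ref{prop:neat10}, is the technical core of the theorem and is absent from your proposal.

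The second gap is the one you flag yourself: converting $\sum_n\Prob(A_n)=\infty$ into almost sure absorption. The second-moment/conditional Borel--Cantelli route you sketch would at best give $\Prob(A_n\ \text{i.o.})>0$ without an additional zero-one argument, and the correlation estimates are not supplied. The paper avoids correlations entirely (Proposition \ref{prop:neat9}): if with positive probability the unabsorbed chain visits $0$ only finitely often, then for some level $L$ the probability of never returning to $0$ after a visit to $0$ at level $L$ is positive, and monotonicity of $f$ makes this escape probability non-decreasing in the level; hence the number of zeros is stochastically dominated by a geometric random variable and has finite expectation. The contrapositive gives ${\bf E}^*[K]=\infty\Rightarrow{\bf P}^*(K=\infty)=1$. (For the convergent direction your FKG inequality is plausible but unverified; it is also unnecessary, since the first Borel--Cantelli lemma already gives $K<\infty$ a.s., after which one only needs to exhibit a reachable state from which the chain avoids $0$ forever with positive probability, exactly as in the paper's concluding Claim.)
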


\bigskip The proof of Theorem \ref{theorem:neat5} nearly mirrors that of Theorem \ref{theorem:neat3}.  A discrete-time-inhomogeneous Markov process $\{ M_j\}$ is defined, where$$(M_{j+1}|M_j)=\left\{\begin{array}{ll}\text{Bin}\big(M_j,\frac{1-p}{p}\big)+\text{Poiss}\big(\frac{1-p}{p}f(j)\big) &\text{if }M_j\geq 1\\ 0 &\text{if }M_j=0\end{array}\right.$$Transience of the non-uniform frog model is shown to correspond to $\{M_j\}$ eventually arriving at the absorbing state $0$ with probability 1.  $\{N_j\}$ will then represent a process just like $\{M_j\}$, except $(N_{j+1}|N_j=0)=\text{Poiss}(\frac{1-p}{p}f(j))$ (i.e. $0$ is not an absorbing state).  Most of the focus is devoted to proving Theorem \ref{theorem:neat7}, which states that $\{N_j\}$ will attain the value 0 infinitely often with probability 1 if and only if the sum in \eqref{sum-inf} diverges.  The proof involves establishing a series of propositions (\ref{prop:neat8}-\ref{prop:neat10}), which are essentially the discrete analogues of the three propositions (\ref{prop:neat}-\ref{prop:neat4}) that will be used to establish Theorem \ref{theorem:neat2} in the continuous case.  Theorem \ref{theorem:neat5} then follows easily from Theorem \ref{theorem:neat7}.

\bigskip
\noindent $Remark\ 2.$ As with the model on $\R$, allowing sleeping frogs to also reside to the left of the origin does not complicate matters significantly in the case of the model on $\mathbb{Z}$.  If the domain of $f$ is expanded to all of $\mathbb{Z}\backslash\left\{0\right\}$, then the condition given in Theorem \ref{theorem:neat5} continues to apply as long as$$\sum_{j=1}^{\infty}\Big(\frac{1-p}{p}\Big)^j f(-j)<\infty$$since the above sum is equal to $\E[L^*_{(-\infty,0)}]$ (where $L^*_{(a,b)}$ denotes the number of distinct frogs originating in $(a,b)\cap\mathbb{Z}$ that ever hit the origin).  Conversely, since $L^*_{(-N-1,0)}$ has a Poisson distribution with mean equal to the sum of the first $N$ terms in the expression above, the divergence of this sum will, as with the continuous case (see $Remark\ 1$), imply recurrence of the model.

\medskip
After completing the proof of Theorem \ref{theorem:neat5} (along with an accompanying lemma), I conclude the paper with a short section in which I present a pair of counterexamples where $f$ is not monotone increasing, and the tight conditions of Theorems \ref{theorem:neat3} and \ref{theorem:neat5} cease to apply.
  
  \section{Transience vs. non-transience for the model on $\R$}
  
  \subsection{The process $\left\{X_t\right\}$} \label{ss:process Xt}

  Set $\lambda =\frac{1}{2}$.  For any $t\geq 0$ let $X_t$ denote the number of frogs originating in $[0,t]$ that ever pass the point $x=t$.  Now note that $\lim\limits_{t\rightarrow\infty}X_t >0$ if and only if (i) the set of points (initially) containing sleeping frogs is unbounded ($f$ being nonnegative and monotonically increasing implies it is bounded on compact sets, which guarantees that no bounded region will contain infinitely many sleeping frogs) and (ii) all of these frogs are eventually awakened.  Since a Brownian motion with leftward drift $\frac{1}{2}$ is continuous and goes to $-\infty$ with probability 1, it follows that\begin{equation}\label{Xtequiv}\lim\limits_{t\rightarrow\infty}X_t >0\Longleftrightarrow\{\text{infinitely many frogs return to the origin}\}\end{equation}Having established this equivalence, we now present the following proposition.
  
  \begin{prop} \label{prop:bd}
  $\left\{X_t\right\}$ is a continuous-time-inhomogeneous birth-death process with birth rate $f(t)1_{(X_t>0)}$ and death rate $X_t$.
  \end{prop}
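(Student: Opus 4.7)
The plan is to exploit the classical fact that a Brownian motion with leftward drift $\tfrac{1}{2}$ attains a pathwise maximum (over $[0,\infty)$) distributed as $\text{Exp}(1)$. Writing $s_0 = 0$ and $s_1 < s_2 < \cdots$ for the positions of the initial and sleeping frogs (the $s_i$ for $i\geq 1$ forming a Poisson process of intensity $f$ on $[0,\infty)$), I associate with each frog $i$ the maximum further rightward displacement $E_i$ of its driving Brownian motion; the $E_i$ are i.i.d.\ $\text{Exp}(1)$ and independent of $\{s_i\}$. Then frog $i$, once activated, covers precisely the spatial interval $[s_i, s_i + E_i]$, and by continuity of Brownian motion the set of activated frogs is an initial segment $\{0,1,\ldots,K\}$: $i \geq 1$ is activated iff $s_j + E_j \geq s_i$ for some activated $j < i$.

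The first substantive step is the equivalence
\[
\text{frog } i \text{ is activated} \iff X_{s_i^-} > 0 \qquad (\text{a.s.}).
\]
One direction: if some activated $j < i$ has $s_j + E_j \geq s_i$, then $j$ is still in $X_{s_i^-}$ since $s_j < s_i \leq s_j + E_j$. Conversely, if $X_{s_i^-} > 0$, then some activated $j < i$ must satisfy $s_j + E_j \geq s_i$, with strict inequality a.s.\ because the law of $E_j$ has no atoms. Granted this equivalence, the birth rate is immediate: in $(t, t+dt)$ the Poisson process places a new sleeping frog with probability $f(t)\,dt + o(dt)$, and that frog joins $X$ precisely when $X_t > 0$ (entering essentially at its birth position, since $E_i > 0$ a.s.).

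For the death rate I would invoke memorylessness of the exponential. Conditional on frog $i$ being alive at parameter $t$ (so $s_i < t < s_i + E_i$), the residual reach $s_i + E_i - t$ is again $\text{Exp}(1)$, and these residuals are mutually independent across the $X_t$ alive frogs. Hence each alive frog ``dies'' (drops out of $X_u$ as $u$ crosses $s_i + E_i$) at rate $1$, yielding a total death rate of $X_t$. The Markov property of $\{X_t\}$ then follows by combining the independence of the Poisson process of sleeping frogs on $(t,\infty)$ from everything in $[0,t]$ with the memorylessness of these residual reaches.

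The main obstacle is justifying the activation equivalence rigorously and, relatedly, verifying that $X_t$ alone — with no extra information about the actual positions of the currently alive frogs to the left of $t$ — carries sufficient information to determine future dynamics. The key point is that ``passing $t$'' is a one-sided condition, so the post-$t$ behavior depends only on the $X_t$ memoryless residual reaches and on the independent Poisson process in $(t,\infty)$, both of which are precisely what appear in the claimed birth and death rates.
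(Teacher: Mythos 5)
Your argument is correct and follows essentially the same route as the paper: both rest on the fact that the rightmost excursion of a Brownian motion with leftward drift $\tfrac{1}{2}$ is $\text{Exp}(1)$, derive the death rate $X_t$ from the memoryless residual reaches (the paper phrases this via the strong Markov property as $\text{Bin}(X_t,1-e^{-dt})$ deaths in $[t,t+dt)$), and obtain the birth rate $f(t)1_{(X_t>0)}$ from the Poisson intensity of sleeping frogs together with the observation that a newly placed frog is activated precisely when an alive particle reaches it. Your explicit statement of the activation equivalence and the initial-segment structure is a slightly more detailed bookkeeping of what the paper leaves implicit, but it is the same proof.
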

  
  \begin{proof}
  By a straight forward argument involving an exponential martingale it is known that the right most point reached by a Brownian motion (beginning at the origin) with leftward drift $\frac{1}{2}$, has an exponential distribution with mean $1$.  By the strong Markov property it follows that if $0<a<b$, then the probability that a frog originating in $[0,a]$ ever passes $x=b$ (conditioned on its passing $x=a$) equals $e^{-(b-a)}$.  Therefore, if we let $R_{(t,dt)}$ represent the number of frogs originating in $[0,t)$ that reach $x=t$, but not $x=t+dt$, then $\big(R_{(t,dt)}|X_t\big)$ has distribution $\text{Bin}(X_t,1-e^{-dt})$.  Since $1-e^{-dt}=dt+o(dt)$ as $dt\rightarrow 0$, it then follows that $\left\{X_t\right\}$ has ``death rate" $X_t$.  Furthermore, since the number of sleeping frogs in $(t,t+dt)$ has distribution $\text{Poiss}(\lambda_{(t,dt)})$ (where $\lambda_{(t,dt)}=\int_t^{t+dt}f(u)du$), and the probability all such frogs are awoken and reach $x=t+dt$, approaches $1$ as $dt\rightarrow 0$ (provided $X_t>0$), this means $\left\{X_t\right\}$ has ``birth rate" $f(t)1_{(X_t>0)}$.  Hence, the proof is complete.
  \end{proof}
  
  \medskip
  \noindent $Remark\ 3.$ While the subscript $t$ denoted a spatial parameter in the original definition of $\left\{X_t\right\}$, it will be referred to as time from here on out in order to maintain consistency with the expression ``continuous-time birth-death process".  In addition, the elements in the process $\left\{X_t\right\}$ will be called particles, rather than frogs.  A particle will be said to ``die" at time $t_0$ if the point furthest to the right reached by that particle is $x=t_0$.
  
\subsection{ The process $\left\{Y_t\right\}$.} \label{ss:process Yt} The statement \eqref{Xtequiv}, along with the scale invariance of the original model with respect to $\lambda$ discussed in the introduction, together imply that Theorem \ref{theorem:neat3} can be proven by showing that the process $\left\{X_t\right\}$ goes extinct with probability 1 if and only if formula \eqref{int-cond} holds (for the case $\lambda=\frac{1}{2}$).  So let $\left\{Y_t\right\}$ be another continuous-time-inhomogeneous birth-death process with birth rate $f(t)$ and death rate $Y_t$ (hence, it differs from $\left\{X_t\right\}$ only in the sense that $0$ is not an absorbing state).  $\left\{Y_t\right\}$ is identified with a triple $(\Omega,\mathcal{F},{\bf P})$ defined as follows: $\Omega$ will represent the set of all functions $\omega :[0,\infty)\rightarrow\mathbb{N}$ such that $\omega(0)=1$ and where $\omega$ is constant everywhere except at a countable collection of points $p_1<p_2<\dots$, where for each $i\geq 1$, $\omega(p_i)=\lim\limits_{t\rightarrow p^-_i}\omega(t)\pm 1$ (note that $\Omega$ can be thought of as the collection of all possible paths $\left\{Y_t\right\}$ can take).  Let $\mathcal{F}$ denote the $\sigma$-field on $\Omega$ generated by the finite dimensional sets $\left\{\omega:\omega(s_i)=C_i\ \text{for }1\leq i\leq n\right\}$ (where $0<s_1<\dots <s_n$ and $C_i\in\mathbb{N}$ for each i).  Finally, ${\bf P}$ will refer to the probability measure on $(\Omega,\mathcal{F})$ associated with $\left\{Y_t\right\}$.  The primary task involved in moving towards a proof of Theorem \ref{theorem:neat3} will consist of proving a statement about $\left\{Y_t\right\}$.  This comes in the form of Theorem \ref{theorem:neat2}.
  
  \begin{theorem}\label{theorem:neat2}
  Assume $f:[0,\infty)\rightarrow [0,\infty)$ is monotonically increasing and define the value $V(\omega)=\#\left\{\text{points where }\omega\text{ jumps from }0\text{ to }1\right\}$.  Then ${\bf P}(V=\infty )=1$ if and only if\begin{equation}\label{int-inf}\int_0^{\infty}e^{-f(t)}f(t)dt=\infty\end{equation}If \eqref{int-inf} does not hold, then ${\bf P}(V=\infty)=0$.
  \end{theorem}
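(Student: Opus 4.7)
The plan is to exploit the linearity structure of $\{Y_t\}$: because the birth rate $f(t)$ does not depend on the state and the death rate $Y_t$ decomposes as a sum of independent unit-rate deaths of the individual particles, all particles live and die independently, making $\{Y_t\}$ an M/M/$\infty$-type process with inhomogeneous immigration.  Starting from $Y_0=1$, the initial particle survives to time $t$ with probability $e^{-t}$; independently, particles born in $[u,u+du]$ for $u\in[0,t]$ form a Poisson process of rate $f(u)$, each surviving to $t$ with probability $e^{-(t-u)}$.  By Poisson thinning, $Y_t$ has the distribution of the independent sum $\text{Bern}(e^{-t})+\text{Poiss}(m(t))$ where $m(t)=\int_0^t e^{-(t-u)}f(u)\,du$.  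Hence
\[ {\bf P}(Y_t=0)=(1-e^{-t})\,e^{-m(t)}. \]

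Second, since $V$ counts the upward jumps of $\{Y_t\}$ from state $0$, the compensator formula gives
\[ \E[V]=\int_0^\infty f(t)\,{\bf P}(Y_t=0)\,dt. \]
Using the monotonicity of $f$, I would prove that $\int_0^\infty f(t)\,e^{-m(t)}\,dt$ and $\int_0^\infty f(t)\,e^{-f(t)}\,dt$ diverge together.  The inequality $m(t)\le f(t)$ is immediate from $f(u)\le f(t)$ for $u\le t$.  For a matching lower bound, splitting the defining integral for $m$ at $t-L$ for a suitable $L$ and using monotonicity yields $m(t)\ge f(t-L)(1-e^{-L})$; a change-of-variable argument then transfers divergence/convergence between the two integrals.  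Hence \eqref{int-inf} holds if and only if $\E[V]=\infty$.  When \eqref{int-inf} fails, $\E[V]<\infty$, so $V<\infty$ almost surely and ${\bf P}(V=\infty)=0$, establishing both the easier direction of the theorem and its final sentence.

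The main obstacle is the converse: when $\E[V]=\infty$, upgrading this to ${\bf P}(V=\infty)=1$.  My plan is a second-moment / Kochen--Stone argument applied to the events $A_n=\{Y_{t_n}=0\}$ along a suitably chosen increasing mesh $t_1<t_2<\cdots$.  The conditional law of $Y_{t_n}$ given $Y_{t_m}$ (for $m<n$) is the independent sum of $\text{Bin}\bigl(Y_{t_m},e^{-(t_n-t_m)}\bigr)$ (survivors from $t_m$) and $\text{Poiss}\bigl(\int_{t_m}^{t_n}e^{-(t_n-u)}f(u)\,du\bigr)$ (survivors from later immigrants), so the covariance of ${\bf 1}_{A_m}$ and ${\bf 1}_{A_n}$ decays geometrically in $t_n-t_m$.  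Choosing the mesh so that $\sum_n{\bf P}(A_n)=\infty$ while the off-diagonal covariance sum stays controlled yields $A_n$ infinitely often almost surely.  Finally, since $f$ is monotone increasing and not identically zero, $\int_T^\infty f(t)\,dt=\infty$ for every $T$, so starting from state $0$ the process leaves $0$ in finite time a.s.; this converts infinitely many visits to $0$ into infinitely many $0\!\to\!1$ jumps, giving $V=\infty$ almost surely.
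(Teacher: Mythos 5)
Your setup---the representation $Y_t=\mathrm{Bern}(e^{-t})+\mathrm{Poiss}(m(t))$ with $m(t)=\int_0^te^{-(t-u)}f(u)\,du$, the formula $\E[V]=\int_0^\infty f(t)\,{\bf P}(Y_t=0)\,dt$, and the bound $m(t)\le f(t)$ for the divergence direction---matches the paper. But your convergence direction has a genuine gap. The lower bound $m(t)\ge(1-e^{-L})f(t-L)$ loses the multiplicative constant $1-e^{-L}<1$ in the exponent, and the criterion is sharp at exactly that level. For instance, with $f(t)=\log(1+t)+3\log\log(t+e)$ one has $\int_0^\infty e^{-f}f\,dt<\infty$, yet for every fixed $L$,
\[
\int_1^\infty f(t)\,e^{-(1-e^{-L})f(t-L)}\,dt\;\asymp\;\int_1^\infty \frac{\log t}{t^{\,1-e^{-L}}(\log t)^{3(1-e^{-L})}}\,dt=\infty,
\]
so your bound cannot establish $\E[V]<\infty$ even in this smooth borderline case; letting $L$ grow with $t$ runs into further trouble when $f$ has large jumps, which monotonicity permits. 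The paper avoids any constant loss by using the exact a.e.\ identity $f(t)=\lambda_t+\lambda_t'$ (with $\lambda_t=m(t)$), writing $\int e^{-f}f\,dt=\int e^{-\lambda_t}f(t)e^{-\lambda_t'}\,dt$, splitting on $\{\lambda_t'\le1\}$, and evaluating $\int_0^\infty e^{-\lambda_t}\lambda_t'\,dt=\int_0^\infty e^{-\lambda_t}\lambda_t\lambda_t'\,dt=1$ via the substitution $x=\lambda_t$; this yields $\int_0^\infty e^{-\lambda_t}f(t)\,dt\le e\int_0^\infty e^{-f}f\,dt+2$.

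The step from $\E[V]=\infty$ to ${\bf P}(V=\infty)=1$ is also not delivered by your plan. Kochen--Stone gives only ${\bf P}(A_n\ \text{i.o.})>0$, and there is no zero--one law at hand for this time-inhomogeneous process, so the upgrade to probability one is still owed. Moreover the mesh itself is problematic: one computes ${\bf P}(A_m\cap A_n)/\big({\bf P}(A_m){\bf P}(A_n)\big)=e^{e^{-(t_n-t_m)}m(t_m)}/(1-e^{-t_n})$, so keeping the correlations bounded forces gaps $t_{n+1}-t_n\gtrsim\log f(t_n)$, and in borderline cases $\sum_n{\bf P}(A_n)$ along such a sparse mesh can converge even though $\int_0^\infty f(t){\bf P}(Y_t=0)\,dt=\infty$. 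The paper's route is different and more robust: if ${\bf P}(V=\infty)<1$, then ${\bf P}(U_x\mid\omega(x)=1)>0$ for some $x$, where $U_x$ is the event of no $0\to1$ jumps after time $x$; since $f$ is increasing, a coupling shows $t\mapsto{\bf P}(U_t\mid\omega(t)=1)$ is non-decreasing, so the number of $0\to1$ jumps after $x$ is stochastically dominated by a geometric random variable, giving $\E[V]<\infty$---the contrapositive of what is needed. Your closing observation that infinitely many visits to $0$ convert into infinitely many $0\to1$ jumps is fine, but you should adopt (or genuinely reprove) both of the missing ingredients above.
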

  
  \medskip
  \noindent The proof of Theorem \ref{theorem:neat2} has three main steps.  The first one entails proving the following proposition.  Note that in the statement of this proposition, and those following it, ${\bf E}$ will denote expectation with respect to the probability measure ${\bf P}$ and $f$ is assumed to be monotonically increasing.
  
  \begin{prop}\label{prop:neat}
  \begin{equation}\label{int-implies-exp}\int_0^{\infty}e^{-f(t)}f(t)dt=\infty\implies{\bf E}[V]=\infty\end{equation}
  \end{prop}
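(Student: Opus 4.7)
The plan is to realize $\{Y_t\}$ as an immigration-death process, which yields an exact formula for ${\bf P}(Y_t=0)$, combine that with the monotonicity of $f$ to produce a clean lower bound of the form $c\,e^{-f(t)}$, and then use the standard identity that expresses ${\bf E}[V]$ as the integral of the birth rate against ${\bf P}(Y_{t^-}=0)$.

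Concretely, since $\{Y_t\}$ has state-independent birth rate $f(t)$ and each particle independently dies at rate $1$, it admits the graphical construction: take $\tau_0\sim\text{Exp}(1)$ independently of a Poisson random measure $\mathcal{M}$ on $[0,\infty)\times[0,\infty)$ with intensity $f(u)\,du\otimes e^{-s}\,ds$, where a point $(u,s)$ represents an immigrant born at time $u$ with lifetime $s$. Then
$$Y_t=\mathbf{1}_{\{\tau_0>t\}}+\#\bigl\{(u,s)\in\mathcal{M}:u\leq t<u+s\bigr\}.$$
The second term is Poisson with mean $\mu(t):=\int_0^t e^{-(t-u)}f(u)\,du$, independent of the first, so ${\bf P}(Y_t=0)=(1-e^{-t})\,e^{-\mu(t)}$. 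Monotonicity of $f$ gives $\mu(t)\leq f(t)\int_0^t e^{-(t-u)}\,du\leq f(t)$, hence
$${\bf P}(Y_t=0)\geq (1-e^{-t})\,e^{-f(t)}.$$

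I would then use that $V$ counts the births occurring while $Y_{t^-}=0$, and that the compensator of the birth point process with respect to the natural filtration of $\{Y_t\}$ is $f(t)\,dt$. Campbell's formula combined with Fubini yields
$${\bf E}[V]=\int_0^\infty f(t)\,{\bf P}(Y_{t^-}=0)\,dt.$$
Plugging in the lower bound and restricting the integral to $t\geq 1$ gives ${\bf E}[V]\geq (1-e^{-1})\int_1^\infty f(t)e^{-f(t)}\,dt$. Because $f$ is monotone and thus locally bounded, $\int_0^1 f(t)e^{-f(t)}\,dt<\infty$, so the hypothesis $\int_0^\infty f(t)e^{-f(t)}\,dt=\infty$ forces $\int_1^\infty f(t)e^{-f(t)}\,dt=\infty$, whence ${\bf E}[V]=\infty$.

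The main obstacle is essentially bookkeeping: justifying the graphical construction of $\{Y_t\}$ and the Campbell identity for ${\bf E}[V]$ at the level of rigor the paper expects. Both are standard consequences of the thinning and superposition properties of Poisson processes, but the author's outline (``$Y_t$ behaves increasingly like a Poisson r.v.\ with mean $\mu(t)$'') suggests they may instead derive the Poisson law of the immigrant contribution through the Kolmogorov forward equations. Either route yields the same lower bound on ${\bf P}(Y_t=0)$, and in either case monotonicity of $f$ enters only at the single step $\mu(t)\leq f(t)$, which is consistent with the counterexamples teased at the end of the introduction where monotonicity is dropped.
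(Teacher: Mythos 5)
Your proposal is correct and follows essentially the same route as the paper: the same decomposition $Y_t=\text{Bern}(e^{-t})+\text{Poiss}(\lambda_t)$ with $\lambda_t=\int_0^t e^{-(t-u)}f(u)\,du$, the same monotonicity bound $\lambda_t\leq f(t)$ giving ${\bf P}(Y_t=0)\geq(1-e^{-t})e^{-f(t)}$, and the same identity ${\bf E}[V]=\int_0^\infty f(t){\bf P}(Y_t=0)\,dt$. The only difference is presentational (Poisson random measure and Campbell's formula versus the paper's infinitesimal rate arguments), so no further comparison is needed.
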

  
  \medskip
  \noindent After Proposition \ref{prop:neat} is established, it is then shown how the result can be used to prove one direction of Theorem \ref{theorem:neat2}.  This entails establishing the following implication.
  
\begin{prop}\label{prop:neat1}
${\bf E}[V]=\infty\implies{\bf P}(V=\infty)=1$.
\end{prop}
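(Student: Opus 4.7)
The plan is to prove the contrapositive: if ${\bf P}(V=\infty) < 1$ then ${\bf E}[V] < \infty$. The key function is
$$g(t) := {\bf P}\bigl(\text{the birth-death process with birth rate } f(t+\cdot) \text{ and death rate } Y_{\cdot},\, Y_0=1, \text{ ever hits } 0\bigr).$$
The first step is to show $g$ is monotonically non-increasing. I would do this by a standard graphical coupling of two copies $Y^t$ and $Y^{t'}$ (with $t \leq t'$) of the dynamics, driven by common Poisson point processes in the plane for potential births and potential deaths. Since $f$ is non-decreasing, the birth threshold $f(t'+u) \geq f(t+u)$ gives the later-started copy a stochastically larger birth stream, while common thinning of the death events preserves the partial order $Y^t_u \leq Y^{t'}_u$ at every jump. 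Hence the later-started copy is less likely to reach $0$, so $g(t') \leq g(t)$.

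Monotonicity of $g$ yields a clean dichotomy: either (i) $g \equiv 1$ on $[0,\infty)$, or (ii) there exist $c \in (0,1)$ and $t_0 \geq 0$ such that $g(t) \leq c$ for all $t \geq t_0$. A preliminary observation is that, since $f$ is non-decreasing and not identically zero, $\int_t^\infty f(u)\,du = \infty$ for every $t$; consequently, once $\{Y_t\}$ reaches $0$ it a.s.\ leaves in finite time (the waiting time at $0$ has hazard $f(t+\cdot)$ whose cumulative integral diverges). This removes any worry about the process getting stuck at $0$ and reduces the analysis to tracking hitting times of $0$.

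In case (i), let $\sigma_n$ denote the time of the $n$-th jump from $0$ to $1$, a stopping time in the natural filtration of $\{Y_t\}$. The strong Markov property at $\sigma_n$ together with $g \equiv 1$ gives ${\bf P}(\sigma_{n+1} < \infty \mid \sigma_n = s,\, \sigma_n < \infty) = g(s) = 1$, and $g(0)=1$ provides the base case $\sigma_1 < \infty$ a.s. Induction yields $\sigma_n < \infty$ a.s.\ for every $n$, so $V=\infty$ a.s., contradicting ${\bf P}(V=\infty) < 1$. We are therefore in case (ii). Setting $p_n := {\bf P}(V \geq n) = {\bf P}(\sigma_n < \infty)$ and applying the strong Markov property at $\sigma_n$,
\begin{align*}
p_{n+1} \;=\; {\bf E}\bigl[g(\sigma_n)\,1_{\{\sigma_n < \infty\}}\bigr]
\;\leq\; {\bf P}(\sigma_n < t_0) + c \cdot p_n.
\end{align*}
Since births in $\{Y_t\}$ occur according to the inhomogeneous Poisson process of intensity $f$ independently of the state, the total number of births on $[0, t_0]$ is $\text{Poiss}(M)$ with $M = \int_0^{t_0} f(s)\,ds < \infty$. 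As $\sigma_n < t_0$ forces at least $n$ births on $[0, t_0]$, we get $\epsilon_n := {\bf P}(\sigma_n < t_0) \leq {\bf P}(\text{Poiss}(M) \geq n)$, which decays faster than any geometric. Iterating the recursion $p_{n+1} \leq c p_n + \epsilon_n$ yields $p_n \leq C c^n$, hence ${\bf E}[V] = \sum_{n \geq 1} p_n < \infty$, completing the contrapositive.

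The principal obstacle is the monotonicity of $g$; the graphical coupling is standard in spirit but requires care to verify that the partial order is preserved at every jump, particularly at death events where the two copies share some death attempts but have different death thresholds. The rest of the argument is a transparent application of the strong Markov property combined with an elementary linear recursion.
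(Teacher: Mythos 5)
Your proof is correct and follows essentially the same route as the paper: both argue the contrapositive, use the monotone coupling afforded by $f$ being increasing to show that the chance of returning to $0$ from state $1$ at time $t$ (your $g(t)$, which is $1-{\bf P}(U_t\mid\omega(t)=1)$ in the paper's notation) is monotone, and then deduce a geometric tail for $V$. Your dichotomy on $g\equiv 1$ versus $g\le c<1$ and the explicit recursion $p_{n+1}\le cp_n+\epsilon_n$ are just slightly different bookkeeping for the paper's derivation that ${\bf P}(U_x\mid\omega(x)=1)>0$ for some $x$ and its bound on ${\bf E}[V_x]$.
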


\medskip
\noindent After establishing Proposition \ref{prop:neat1}, we address the issue of proving the other direction of Theorem \ref{theorem:neat2}.  This comes in the form of the proceeding proposition.

\begin{prop}\label{prop:neat4}
\begin{equation}\int_0^{\infty}e^{-f(t)}f(t)dt<\infty\implies{\bf P}(V<\infty)=1\end{equation}
\end{prop}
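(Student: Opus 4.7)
The plan is to prove the stronger statement $\mathbf{E}[V] < \infty$, from which $\mathbf{P}(V<\infty)=1$ is immediate.

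The first step, identical to the beginning of the proof of Proposition \ref{prop:neat}, is the compensator identity: the $0\to 1$ jumps of $\{Y_t\}$ form a point process with predictable intensity $f(t)\mathbf{1}[Y_{t-}=0]$, so
\begin{equation*}
\mathbf{E}[V] = \int_0^\infty f(t)\, \mathbf{P}(Y_t = 0)\, dt.
\end{equation*}
The second step is to invoke the Poisson approximation for $Y_t$ established earlier in Section \ref{ss:process Yt}: since the distribution of $Y_t$ approaches $\mathrm{Poiss}(\mu(t))$ as $t \to \infty$, where $\mu(t) := \int_0^t e^{-(t-u)} f(u)\, du$, one obtains $\mathbf{P}(Y_t = 0) \leq C e^{-\mu(t)}$ for all large $t$. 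Thus the problem reduces to the deterministic claim
\begin{equation*}
\int_0^\infty f(t) e^{-\mu(t)}\, dt < \infty
\end{equation*}
under the hypothesis $\int_0^\infty f(t) e^{-f(t)}\, dt < \infty$.

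The main obstacle is this deterministic comparison. Since monotonicity of $f$ gives $\mu(t) \leq f(t)$, one has $e^{-\mu(t)} \geq e^{-f(t)}$ pointwise, so the naive pointwise bound runs in the \emph{wrong} direction. I plan to instead exploit the ODE $\mu'(t) + \mu(t) = f(t)$ obtained by differentiating the convolution defining $\mu$. Integration by parts, using $\int_0^\infty \mu'(t) e^{-\mu(t)}\, dt = 1 - e^{-\mu(\infty)}$, yields
\begin{equation*}
\int_0^\infty f(t) e^{-\mu(t)}\, dt = 1 - e^{-\mu(\infty)} + \int_0^\infty \mu(t) e^{-\mu(t)}\, dt.
\end{equation*}
The hypothesis forces $f\to\infty$ (hence $\mu\to\infty$ and $e^{-\mu(\infty)}=0$), reducing matters to showing $\int_0^\infty \mu(t) e^{-\mu(t)}\, dt < \infty$. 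For this I would decompose $[0,\infty)$ along the level sets $\{t:\mu(t)\in[k,k+1]\}$: on each such block the integrand is at most $(k+1)e^{-k}$, and the block length is controlled via $\mu'(t) = f(t) - \mu(t)$ together with monotonicity of $f$. Blocks corresponding to ``stable'' regions (where $f\asymp \mu$) are directly dominated by the corresponding portion of $\int f e^{-f}\, dt$; the transient windows just after a sharp increase of $f$, where $\mu$ lags behind $f$ and the pointwise ratio $e^{-\mu}/e^{-f}$ is large, contribute only bounded boundary terms, summable across the jumps of $f$ by the exponential catch-up rate provided by $\mu' = f - \mu$.
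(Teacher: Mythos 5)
Your proposal is correct and is essentially the paper's own argument: the reduction to showing $\int_0^\infty f(t)e^{-\lambda_t}\,dt<\infty$ (your $\mu$ is the paper's $\lambda_t$), the a.e.\ identity $f(t)=\lambda_t+\lambda_t'$, and the dichotomy on whether $\lambda_t'=f(t)-\lambda_t$ is below or above a fixed constant --- comparison with $\int fe^{-f}$ in the first case, smallness of the time spent in the second --- are exactly the steps in the paper, with your level-set blocks amounting to a discretization of the paper's change-of-variables identity $\int_0^\infty e^{-\lambda_t}\lambda_t\lambda_t'\,dt=\int_0^\infty xe^{-x}\,dx=1$. Two points to tighten in the write-up: organize the ``catch-up'' estimate by the level blocks of $\mu$ (within each block $\{t:\mu(t)\in[k,k+1)\}$ the set $\{\mu'>C\}$ has measure at most $1/C$, since $\mu$ rises by only $1$ there) rather than by ``jumps of $f$'', and justify the integration by parts by verifying that $\lambda_t$ is absolutely continuous, increasing, and tends to infinity, all of which the paper checks explicitly.
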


\bigskip
\begin{proof}[Proof of Proposition \ref{prop:neat}]
  First note that for any $t>0$, $Y_t$ is a random variable of the form $\text{Bern}(e^{-t})+\text{Poiss}(\lambda_t)$ (with the two parts of the sum independent) where\begin{equation}\label{lambeq}\lambda_t=\int_0^tf(u)e^{-(t-u)}du=f(t)-f(t)e^{-t}-\int_0^t(f(t)-f(u))e^{-(t-u)}du\end{equation}  This follows from the fact that the single particle we began with at time zero remains ``alive" at time $t$ with probability $e^{-t}$ (hence the term $\text{Bern}(e^{-t})$), along with the fact that, if $f$ is continuous at $u$ (note that $f$ being increasing implies it is continuous a.e.), then the event of a particle being ``born" inside the time interval $[u, u+du)$ and surviving until at least time $t$, has probability $(f(u)e^{-(t-u)}+o(1))du$ as $du\rightarrow 0$ (where disjoint intervals are independent).  It is then implied by \eqref{lambeq} that $\lambda_t\leq f(t)\ \forall\ t\in [0,\infty)$, from which it follows that\begin{equation}\label{prob-asympt}{\bf P}(\omega(t)=0)\geq e^{-f(t)}(1-e^{-t})\end{equation}Since the probability $\left\{Y_t\right\}$ jumps from 0 to 1 on an interval $[t,t+dt)$ is $(1+o(1)){\bf P}(\omega(t)=0)f(t)dt$ as $dt\rightarrow 0$, it follows that\begin{equation}\label{expV}{\bf E}[V]=\int_0^{\infty}{\bf P}(\omega(t)=0)f(t)dt\end{equation}Combining this with \eqref{prob-asympt} then establishes the implication$$\int_0^{\infty}e^{-f(t)}f(t)dt=\infty\implies{\bf E}[V]=\infty$$Hence, the proof is complete.
\end{proof}

\begin{proof}[Proof of Proposition \ref{prop:neat1}]
 Let $U_x=\left\{\omega\in\Omega :\omega\text{ never jumps from }0\text{ to }1\text{ in }(x,\infty)\right\}$.  If it's assumed that ${\bf P}(V=\infty)<1$, then this implies that there exists $x,L>0$ (with $L\in\mathbb{Z}$) such that ${\bf P}(U_{x+1}|\ \omega(x+1)=L)>0$.  Since \begin{equation}\label{prob-pos}{\bf P}(\left\{\omega(t)>0\text{ on }(x,x+1)\right\}\cap\left\{\omega(x+1)=L\right\}|\ \omega(x)=1)>0\end{equation} we get ${\bf P}(U_x|\ \omega(x)=1)>0$.  Since $f$ is monotonically increasing we can couple $\big(\left\{Y_{t_1+t}\right\}|(Y_{t_1}=1)\big)$ with $\big(\left\{Y_{t_2+t}\right\}|(Y_{t_2}=1)\big)$ (for $t_2>t_1$) so that the former is dominated by the latter.  From this it follows that ${\bf P}(U_t|\ \omega(t)=1)$ is increasing w.r.t. $t$.  Now if we let $V_x(\omega)=\#\left\{\text{points in }(x,\infty)\text{ where }\omega\text{ jumps from }0\text{ to }1\right\}$, the fact that ${\bf P}(U_t|\omega(t)=1)$ is positive (for $t\geq x$) and increasing implies that ${\bf P}(V_x\geq T+1|\ V_x\geq T)\leq 1-{\bf P}(U_x|\ \omega(x)=1)\ \forall\ T\in\mathbb{N}$.  Hence,$${\bf E}[V_x]\leq\sum_{j=0}^{\infty}(1-{\bf P}(U_x|\ \omega(x)=1))^j=\frac{1}{{\bf P}(U_x|\ \omega(x)=1)}<\infty\implies{\bf E}[V]<\infty$$Therefore, we've established the contrapositive of Proposition \ref{prop:neat1}, which establishes the proposition.
\end{proof}

\begin{proof}[Proof of Proposition \ref{prop:neat4}]
It follows from \eqref{prob-asympt} and \eqref{expV} that\begin{equation}\label{expVF}{\bf E}[V]=\int_0^{\infty}e^{-\lambda_t}(1-e^{-t})f(t)dt\end{equation}Since $1-e^{-t}\rightarrow 1$ as $t\rightarrow\infty$, in order to show that ${\bf E}[V]<\infty$ it suffices to establish the following implication.\begin{equation}\label{boundimp}\int_0^{\infty}e^{-f(t)}f(t)dt<\infty\implies\int_0^{\infty}e^{-\lambda_t}f(t)dt<\infty\end{equation}Using the integral formula for $\lambda_t$ in \eqref{lambeq}, we see that if $f$ is continuous at $t$, then $\lambda_t$ is differentiable at $t$ with$$\frac{d\lambda_t}{dt}=\frac{d\Big(e^{-t}\int_0^t e^u f(u)du\Big)}{dt}=f(t)-e^{-t}\int_0^t e^u f(u)du=f(t)-\lambda_t$$Hence, at all continuity points of $f$, we have $f(t)=\lambda_t+\lambda'_t$.  Since $f$ is monotonically increasing, this means it has only countably many discontinuity points, which means it is continuous a.e (as was mentioned in the proof of \ref{prop:neat}).  It then follows that $f(t)=\lambda_t+\lambda'_t$ a.e.  Hence, we can write\begin{equation}\label{double-equ}\int_0^\infty e^{-f(t)}f(t)dt=\int_0^{\infty}e^{-(\lambda_t +\lambda'_t)}f(t)dt=\int_0^{\infty}e^{-\lambda_t}f(t)e^{-\lambda'_t}dt\end{equation}Now let $0\leq t_1<t_2$ and note that\begin{equation}\label{monoforl}\lambda_{t_2}-\lambda_{t_1}=\int_0^{t_2-t_1}e^{-(t_2-u)}f(u)du+\int_0^{t_1}e^{-(t_1-u)}\Big(f(t_2-t_1+u)-f(u)\Big)du>0\end{equation}Hence, $\lambda_t$ is monotonically increasing.  Also note that if $0\leq t_1<t_2\leq N$ (for $N<\infty$) then$$\lambda_{t_2}=\int_0^{t_2}e^{-(t_2-u)}f(u)du=e^{(t_1-t_2)}\int_0^{t_1}e^{-(t_1-u)}f(u)du+\int_{t_1}^{t_2}e^{-(t_2-u)}f(u)du\leq\lambda_{t_1}+(t_2-t_1)f(N)$$Along with \eqref{monoforl} this implies $|\lambda_{t_2}-\lambda_{t_1}|\leq (t_2-t_1)f(N)$, which means that $\lambda_t$ is absolutely continuous on $[0,N]$.  Coupled with $\lambda_t$ being monotonically increasing and satisfying $\lambda_0=0$, this implies that if $g:[0,N]\rightarrow [0,\infty)$ is any Lebesgue measurable function, then\begin{equation}\label{impint}\int_0^{\lambda_N}g(x)dx=\int_0^N g(\lambda_t)\lambda'_t dt\end{equation}(see \cite{WR}, para. 2, pg. 156).  Now since $\underset{t\rightarrow\infty}{\text{lim}}f(t)=\infty$ (otherwise it could not hold that $\int_0^{\infty}e^{-f(t)}f(t)dt<\infty$) this means $\lambda_t=\int_0^t e^{-(t-u)}f(u)du\rightarrow\infty$ as $t\rightarrow\infty$.  Therefore, if $g\geq 0$ is Lebesgue measurable with $\int_0^{\infty}g(x)dx<\infty$, then letting $N\rightarrow\infty$ in \eqref{impint} gives$$\int_0^{\infty}g(x)dx=\int_0^{\infty}g(\lambda_t)\lambda'_t dt$$Specifically looking at the cases $g_1(x)=e^{-x}$ and $g_2(x)=xe^{-x}$, gives the two formulas$$\int_0^{\infty}e^{-\lambda_t}\lambda'_tdt=\int_0^{\infty}e^{-x}dx=1$$ $$\int_0^{\infty}e^{-\lambda_t}\lambda_t\lambda'_tdt=\int_0^{\infty}xe^{-x}dx=1$$Combining these formulas with \eqref{double-equ}, and using the fact that $f(t)=\lambda_t+\lambda'_t$ a.e., then gives\begin{eqnarray*}
\int_0^{\infty}e^{-\lambda_t}f(t)dt & = & \int_0^{\infty}e^{-\lambda_t}f(t)1_{(\lambda'_t\leq 1)}dt+\int_0^{\infty}e^{-\lambda_t}(\lambda_t+\lambda'_t)1_{(\lambda'_t>1)}dt \\ 
& \leq & e\int_0^{\infty}e^{-\lambda_t}f(t)e^{-\lambda'_t}dt+\int_0^{\infty}e^{-\lambda_t}\lambda_t\lambda'_tdt\ +1=e\int_0^{\infty}e^{-f(t)}f(t)dt\ +2<\infty
\end{eqnarray*}Hence, this establishes \eqref{boundimp} which, as was shown, implies ${\bf E}[V]<\infty$, from which it follows that ${\bf P}(V<\infty)=1$.  Hence, the proof is complete.
\end{proof}

\begin{proof}[Proof of Theorem \ref{theorem:neat2}]
The theorem follows immediately from Propositions \ref{prop:neat}, \ref{prop:neat1}, and \ref{prop:neat4}.
\end{proof}

\subsection{ Proving Theorem \ref{theorem:neat3}} \label{ss:pt11} With Theorem \ref{theorem:neat2} established, we can now complete the proof of Theorem \ref{theorem:neat3}.  Due to the relationship discussed earlier between the process $\left\{X_t\right\}$ and the frog model with drift on $\mathbb{R}$, as well as scale invariance of the original model, it suffices to prove the following claim.

\bigskip
\noindent {\bf Claim:} For $f:[0,\infty)\rightarrow [0,\infty)$ monotonically increasing, the process $\left\{X_t\right\}$ dies out with probability 1 if and only if$$\int_0^{\infty}e^{-f(t)}f(t)dt=\infty$$

\begin{proof}
First couple the process $\left\{X_t\right\}$ with the familiar process $\left\{Y_t\right\}$ so that the two processes are identical until $\left\{X_t\right\}$ dies out.  Since Theorem \ref{theorem:neat2} established the implication\begin{equation}\label{jumps-from}\int_0^{\infty}e^{-f(t)}f(t)dt=\infty\implies{\bf P}(\left\{Y_t\right\}\text{ jumps from 0 to 1 i.o.})=1\end{equation}this means that if the left side of \eqref{jumps-from} holds, then with probability 1, $\left\{X_t\right\}$ will eventually die out (i.e. the coupled processes $\left\{Y_t\right\}$ and $\left\{X_t\right\}$ will eventually hit 0).  Conversely, since Theorem \ref{theorem:neat2} also states that if the integral in \eqref{jumps-from} is finite then $V<\infty$ with probability 1, this means that if we let $T_0(\omega)=\left\{t\in[0,\infty):\omega(t)=0\right\}$, then ${\bf P}(\text{sup}\ T_0<\infty)=1$.  Letting $\Prob$ represent the law of $\left\{X_t\right\}$ on $(\Omega, \mathcal{F})$, it now follows that there must be some $t>0$ and some positive integer $M$ s.t.\begin{equation}\label{prob-eq-prob}\Prob(X_s>0\ \forall\ s\geq t|X_t=M)={\bf P}(Y_s>0\ \forall\ s\geq t|Y_t=M)>0\end{equation}and ${\bf P}(Y_t=M)>0$.  This then implies that ${\bf P}(X_t=M)>0$, which along with \eqref{prob-eq-prob}, gives$$\int_0^{\infty}e^{-f(t)}f(t)dt<\infty\implies\Prob(X_s>0\ \forall\ s\in [0,\infty))\geq\Prob(X_t=M)\Prob(X_s>0\ \forall\ s\geq t|X_t=M)>0$$Alongside the first part of the proof, this last result establishes that $\left\{X_t\right\}$ dies out with probability 1 if and only if the integral on the left side of \eqref{jumps-from} diverges.  Thus we have established the above claim, which completes the proof of Theorem \ref{theorem:neat3}.
\end{proof}

\medskip
\noindent $Remark\ 4.$ Note that the result of Theorem \ref{theorem:neat3} can easily be extended to all measurable functions $f:[0,\infty)\rightarrow[0,\infty)$ for which $\exists\ r\in (0,\infty)$ such that $f$ is bounded on $[0,r)$ and increasing on $[r,\infty)$.  This is established by first noting that it follows from \eqref{Xtequiv} that the process is non-transient if and only if\begin{equation}\label{remequiv}\mathbb{P}(\underset{t\rightarrow\infty}{\text{lim}}X_t>0)>0\Longleftrightarrow\mathbb{P}(X_r>0)\mathbb{P}(\underset{t\rightarrow\infty}{\text{lim}}X_t>0|X_r>0)>0\end{equation}Because $\mathbb{P}(X_r>0)\geq e^{-r}$ (since the particle beginning at time $0$ remains ``alive" at time $r$ with probability $e^{-r}$), and because $\mathbb{P}(\underset{t\rightarrow\infty}{\text{lim}}X_t>0|X_r=L)>0$ (for some $L>0$) if and only if $\mathbb{P}(\underset{t\rightarrow\infty}{\text{lim}}X_t>0|X_r=1)>0$, it follows from \eqref{remequiv} and Theorem \ref{theorem:neat3} that the process is non-transient if and only if$$\mathbb{P}(\underset{t\rightarrow\infty}{\text{lim}}X_t>0|X_r=1)>0\Longleftrightarrow\int_r^{\infty}e^{-f(t)}f(t)dt<\infty\Longleftrightarrow\int_0^{\infty}e^{-f(t)}f(t)dt<\infty$$

\section{Transience vs. non-transience for the model on $\mathbb{Z}$}

\subsection{The processes $\left\{M_j\right\}$ and $\left\{N_j\right\}$} Take the non-uniform frog model with drift on $\mathbb{Z}$ and define the process $\left\{M_j\right\}$ as follows.  Let $M_0 =1$ and, for $j\geq 1$, let $M_j$ equal the number of frogs originating in $\left\{0,1,\dots,j-1\right\}$ that ever hit $x=j$.  Much like with the process $\{X_t\}$, we find that\begin{equation}\label{inf-frogs}\lim\limits_{j\rightarrow\infty}M_j >0\Longleftrightarrow\{\text{infinitely many frogs return to the origin}\}\end{equation}Examining the process $\{M_j\}$, we also obtain this next proposition.

\begin{prop}\label{prop:neat6}
$\{M_j\}$ is a discrete-time-inhomogeneous Markov process with $M_0 =1$, $M_1=\text{Bern}(\frac{1-p}{p})$, and for $j\geq 1$ $$(M_{j+1}|M_j)=\left\{\begin{array}{ll}\text{Bin}\big(M_j,\frac{1-p}{p}\big)+\text{Poiss}\big(\frac{1-p}{p}f(j)\big) &\text{if }M_j\geq 1\\ 0 &\text{if }M_j=0\end{array}\right.$$(where the two parts of the above sum are independent).
\end{prop}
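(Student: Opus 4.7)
The plan is to reduce everything to two elementary facts about the biased random walk on $\mathbb{Z}$ with leftward drift. First, by the usual gambler's-ruin / exponential martingale computation, a walk that steps left with probability $p$ and right with probability $1-p$, started at $0$, ever reaches $x=1$ with probability exactly $\frac{1-p}{p}$. Second, by the strong Markov property, once any frog first hits $x=j$, its probability of ever subsequently reaching $x=j+1$ is also $\frac{1-p}{p}$, independent of its past and of every other frog. The base case $j=0$ then drops out immediately: since $f$ is supported on the positive integers there are no sleeping frogs at the origin, so $M_1$ only records whether the initial particle ever reaches $x=1$, giving $M_1\sim\text{Bern}\!\bigl(\tfrac{1-p}{p}\bigr)$.

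For the main recursion, on the event $\{M_j\geq 1\}$ I would decompose $M_{j+1}$ into two independent pieces. The first counts frogs originating in $\{0,\ldots,j-1\}$ that hit $j+1$. Because the walks are nearest-neighbor, any such frog must first pass through $j$, so this count equals the number of the $M_j$ frogs already at $j$ whose post-hitting continuations ever reach $j+1$; by the strong Markov property these continuations are i.i.d.\ with success probability $\frac{1-p}{p}$, yielding a $\text{Bin}\!\bigl(M_j,\tfrac{1-p}{p}\bigr)$ contribution. The second piece counts frogs originating at $j$ that reach $j+1$. On $\{M_j\geq 1\}$ at least one active frog visits $j$, so all sleepers at $j$ are activated, and each then performs an independent biased walk from $j$. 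Since the initial number of sleepers at $j$ is $\text{Poiss}(f(j))$ and is independent of every other random ingredient, Poisson thinning gives a $\text{Poiss}\!\bigl(\tfrac{1-p}{p}f(j)\bigr)$ count, independent of the binomial piece. On the complementary event $\{M_j=0\}$, the nearest-neighbor property forces $M_{j+1}=0$ deterministically: no frog originating in $\{0,\ldots,j-1\}$ can reach $j+1$ without first hitting $j$, and no sleeper at a site $k\geq j$ is ever activated, since activation at $k$ would require some previously active frog to visit $k$ from the left, and hence to visit $j$.

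The point that needs the most care, and which I expect to be the main obstacle, is the independence bookkeeping that underlies the Markov property. I would organize it by conditioning on the first hitting time $\tau_j$ of $x=j$ and on the identities of the $M_j$ frogs that arrive at $j$; the strong Markov property applied at $\tau_j$ then yields that the post-$\tau_j$ walks of these frogs are i.i.d.\ biased walks from $j$ and are jointly independent of the history $\sigma(M_0,\ldots,M_{j-1})$, while the Poisson mark at $j$ together with the post-activation walks of the newly woken sleepers at $j$ are independent of that history by the independence of the Poisson process at distinct sites and of the associated driving walks. Once this decomposition is cleanly established, summing the independent $\text{Bin}\!\bigl(M_j,\tfrac{1-p}{p}\bigr)$ and $\text{Poiss}\!\bigl(\tfrac{1-p}{p}f(j)\bigr)$ contributions gives both the conditional distribution and the Markov property asserted in the proposition.
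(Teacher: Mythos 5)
Your proposal is correct and follows essentially the same route as the paper: the martingale/gambler's-ruin computation giving escape probability $\frac{1-p}{p}$, binomial thinning of the $M_j$ frogs arriving at $x=j$ via the strong Markov property, and Poisson thinning of the $\text{Poiss}(f(j))$ sleepers at $x=j$. The additional care you take with the $M_j=0$ case and the independence bookkeeping only fleshes out details the paper leaves implicit.
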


\begin{proof}
    By a simple martingale argument the probability an active frog residing at $x=j$ ever makes it to $x=j+1$ is $\frac{1-p}{p}$.  Therefore, the expression for $M_1$ follows.  This also implies that if we condition on $M_j$, then for $j\geq 1$ the distribution of the number of frogs that make it to $x=j+1$ which originate in $\left\{0,1,\dots, j-1\right\}$, is $\text{Bin}\big(M_j,\frac{1-p}{p}\big)$.  Adding this to the number of frogs originating at $x=j$ that ever make it to $x=j+1$, while again using the first line of this proof along with the fact that $\eta_j$ (the number of sleeping frogs starting at $x=j$) has distribution $\text{Poiss}\big(f(j)\big)$, gives us the above piecewise expression for $\big(M_{j+1}|M_j\big)$.
\end{proof}
As stated earlier, $\left\{N_j\right\}$ will represent a process identical to $\left\{M_j\right\}$ except that $(N_{j+1}|N_j=0)$ has distribution $\text{Poiss}(\frac{1-p}{p}f(j))$.  $\left\{N_j\right\}$ is identified with a triple $(\Omega^*,\mathcal{F}^*,{\bf P}^*)$ defined as follows.  $\Omega^*$ will represent the set of all functions $\omega:\mathbb{N}\rightarrow\mathbb{N}$, $\mathcal{F}^*$ will represent the $\sigma\text{-field}$ on $\Omega^*$ generated by the finite dimensional sets, and ${\bf P}^*$ will refer to the probability measure on $(\Omega^*,\mathcal{F}^*)$ associated with $\left\{N_j\right\}$ (note that ${\bf P}^*$ is supported on $\left\{\omega\in\Omega^*:\omega(0)=1,\omega(1)\leq 1\right\}$).  Theorem \ref{theorem:neat7} can now be stated formally.

\begin{theorem}\label{theorem:neat7}
If $\frac{1}{2}<p<1$, $f$ is monotonically increasing, and we let $K(\omega)=\#\left\{j\in\mathbb{Z}^+:\omega(j)=0\right\}$, then ${\bf P}^*(K=\infty)=1$ if and only if\begin{equation}\label{sumeqinf}\sum_{j=1}^{\infty}e^{-\frac{1-p}{2p-1}f(j)}=\infty\end{equation}If \eqref{sumeqinf} does not hold then ${\bf P}^*(K=\infty)=0$.
\end{theorem}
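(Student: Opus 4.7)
The plan is to follow the blueprint of Theorem \ref{theorem:neat2} nearly verbatim, replacing integrals with sums and the differential identity $f(t)=\lambda_t+\lambda'_t$ with a first-order difference identity. Write $q=(1-p)/p$ and $\rho=(1-p)/(2p-1)=q/(1-q)$. I would first pin down the marginal distribution of $N_j$. Using the thinning identities $\text{Bin}(\text{Poiss}(\mu),q)\stackrel{d}{=}\text{Poiss}(q\mu)$ and $\text{Bin}(X+Y,q)\stackrel{d}{=}\text{Bin}(X,q)+\text{Bin}(Y,q)$ for independent $X,Y$, a straightforward induction on $j$ starting from $N_0=1$ shows that $N_j\stackrel{d}{=}\text{Bern}(q^j)+\text{Poiss}(\lambda_j)$ (with the two summands independent), where $\lambda_j=\sum_{i=0}^{j-1}q^{j-i}f(i)$ satisfies the recursion $\lambda_{j+1}=q\lambda_j+qf(j)$, $\lambda_0=0$. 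Consequently $\mathbf{P}^*(N_j=0)=(1-q^j)e^{-\lambda_j}$, and since $K=\sum_{j\geq 1}1_{\{N_j=0\}}$, we obtain $\mathbf{E}^*[K]=\sum_{j=1}^{\infty}(1-q^j)e^{-\lambda_j}$, the discrete counterpart of \eqref{expV}.

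For the forward implication (the analog of Proposition \ref{prop:neat}), monotonicity of $f$ yields $\lambda_j\leq f(j-1)\sum_{k=1}^{j}q^k\leq\rho f(j-1)$, so $e^{-\lambda_j}\geq e^{-\rho f(j-1)}$ and divergence of $\sum_j e^{-\rho f(j)}$ immediately forces $\mathbf{E}^*[K]=\infty$. The step $\mathbf{E}^*[K]=\infty\Rightarrow\mathbf{P}^*(K=\infty)=1$ (the analog of Proposition \ref{prop:neat1}) mirrors its continuous counterpart: because $f$ is monotone, for $j_1<j_2$ the shifted process $(N_{j_1+k}\mid N_{j_1}=1)_{k\geq 0}$ can be coupled so as to be dominated by $(N_{j_2+k}\mid N_{j_2}=1)_{k\geq 0}$. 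Therefore $\mathbf{P}^*(N_{j+k}>0\ \forall\, k\geq 1\mid N_j=1)$ is nondecreasing in $j$, and if $\mathbf{P}^*(K=\infty)<1$ this probability would be bounded below by some $c>0$ for $j$ past some threshold, making the count of zero-returns stochastically dominated by a geometric variable and contradicting $\mathbf{E}^*[K]=\infty$.

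The hardest piece, and the discrete analog of Proposition \ref{prop:neat4}, is the converse $\sum_j e^{-\rho f(j)}<\infty\Rightarrow\mathbf{P}^*(K<\infty)=1$. The key identity, obtained by solving $\lambda_{j+1}=q\lambda_j+qf(j)$ for $f(j)$, is $\rho f(j)=\lambda_j+\Delta_j/(1-q)$, where $\Delta_j:=\lambda_{j+1}-\lambda_j$. Nonnegativity of $\Delta_j$ follows from $(1-q)\lambda_j\leq(1-q)\rho f(j)=qf(j)$, which also shows $\{\lambda_j\}$ is monotone. The identity gives $e^{-\rho f(j)}=e^{-\lambda_j}e^{-\Delta_j/(1-q)}$, and I would split $\mathbb{Z}^+=J\sqcup J^c$ with $J=\{j:\Delta_j>1-q\}$. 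On $J^c$, $e^{-\Delta_j/(1-q)}\geq e^{-1}$, so $e^{-\lambda_j}\leq e\cdot e^{-\rho f(j)}$ and $\sum_{j\in J^c}e^{-\lambda_j}\leq e\sum_j e^{-\rho f(j)}<\infty$. For the large-jump indices, enumerate $J=\{j_1<j_2<\cdots\}$; monotonicity of $\lambda_j$ then gives $\lambda_{j_k}\geq\sum_{i=1}^{k-1}\Delta_{j_i}\geq(k-1)(1-q)$, so $\sum_{j\in J}e^{-\lambda_j}\leq\sum_{k\geq 1}e^{-(k-1)(1-q)}<\infty$. Adding the two bounds controls $\mathbf{E}^*[K]$, and Markov's inequality yields $\mathbf{P}^*(K<\infty)=1$. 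The anticipated main obstacle was finding the correct discrete replacement for the identities $\int_0^\infty e^{-\lambda_t}\lambda'_t\,dt=\int_0^\infty e^{-\lambda_t}\lambda_t\lambda'_t\,dt=1$; happily, because $\mathbf{E}^*[K]$ involves only $e^{-\lambda_j}$ rather than $f(j)e^{-\lambda_j}$, the elementary geometric estimate on $J$ above bypasses that trickier $xe^{-x}$ argument entirely.
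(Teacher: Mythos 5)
Your proposal is correct and follows essentially the same route as the paper: the same explicit marginal $N_j\stackrel{d}{=}\mathrm{Bern}(q^j)+\mathrm{Poiss}(\lambda_j)$, the same three-step structure (divergence forces $\mathbf{E}^*[K]=\infty$, a monotone coupling upgrades this to $\mathbf{P}^*(K=\infty)=1$, and the difference identity $\rho f(j)=\lambda_j+\Delta_j/(1-q)$ with a small/large-increment split and geometric tail bound handles the converse), differing only in the cosmetic choice of threshold in the split. The one nit is the lower index in $\lambda_j=\sum_{i=0}^{j-1}q^{j-i}f(i)$: since sleeping frogs occupy only positive integers the sum should start at $i=1$ (equivalently set $f(0)=0$), which changes nothing in the estimates.
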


\subsection{Proving Theorem \ref{theorem:neat5}} We begin this section by presenting Propositions \ref{prop:neat8}-\ref{prop:neat10}.  In places where the proofs bear an especially strong resemblance to those for the model on $\R$, some details are omitted.  In what follows, $f$ is always assumed to be monotonically increasing, and ${\bf E}^*$ will represent expectation with respect to ${\bf P}^*$.

\begin{prop}\label{prop:neat8}
$$\sum_{j=1}^{\infty}e^{-\frac{1-p}{2p-1}f(j)}=\infty\implies{\bf E}^*[K]=\infty$$
\end{prop}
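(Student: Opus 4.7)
The approach mirrors the proof of Proposition \ref{prop:neat}. Setting $q := \frac{1-p}{p}$, the plan is first to identify the one-dimensional distributions of $\{N_j\}$ explicitly, and then to apply monotonicity of $f$ to lower-bound ${\bf P}^*(\omega(j)=0)$ and sum. Using the standard facts that a $q$-thinning of a Poisson is Poisson, that a $q$-thinning of $\text{Bern}(\alpha)$ is $\text{Bern}(\alpha q)$, and that independent Poissons add, an induction on $j$ (base case $N_1 = \text{Bern}(q)$, $\lambda_1 = 0$) yields the identity
$$N_j \stackrel{d}{=} \text{Bin}(1, q^j) + \text{Poiss}(\lambda_j), \qquad \lambda_j := \sum_{k=1}^{j-1} q^{j-k} f(k),$$
with the two summands independent. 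This plays the role of the decomposition $Y_t \sim \text{Bern}(e^{-t}) + \text{Poiss}(\lambda_t)$ used in the continuous case.

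From the decomposition, independence gives ${\bf P}^*(\omega(j)=0) = (1 - q^j)\, e^{-\lambda_j}$. Since $f$ is monotonically increasing and $\sum_{k=1}^{j-1} q^{j-k} \leq q/(1-q)$,
$$\lambda_j \leq \frac{q}{1-q}\, f(j-1) = \frac{1-p}{2p-1}\, f(j-1),$$
where the last equality is an elementary algebraic identity. Hence
$${\bf P}^*(\omega(j)=0) \geq (1 - q^j)\, \exp\!\Big(\!-\tfrac{1-p}{2p-1}\, f(j-1)\Big).$$

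Writing $K = \sum_{j=1}^{\infty} 1_{\{\omega(j)=0\}}$ and applying Fubini gives ${\bf E}^*[K] = \sum_{j=1}^{\infty} {\bf P}^*(\omega(j) = 0)$. Since $q < 1$, there is some $j_0$ beyond which $1 - q^j \geq \tfrac{1}{2}$, so
$${\bf E}^*[K] \geq \tfrac{1}{2} \sum_{j \geq j_0} e^{-\frac{1-p}{2p-1} f(j-1)}.$$
This tail sum differs from $\sum_{j \geq 1} e^{-\frac{1-p}{2p-1} f(j)}$ only by a reindex and finitely many terms, so the hypothesis forces ${\bf E}^*[K] = \infty$, as desired.

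The main obstacle will be the inductive identification of the law of $N_j$: each transition simultaneously thins both the Bernoulli and Poisson components (via $\text{Bin}(\,\cdot\,, q)$) and then adds a fresh independent $\text{Poiss}(qf(j))$, and one must track that the resulting Bernoulli and Poisson summands remain independent throughout. Once that decomposition is in hand, the monotonicity-based bound on $\lambda_j$ and the summing step are essentially direct discrete analogs of the continuous proof.
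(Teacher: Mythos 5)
Your proposal is correct and follows essentially the same route as the paper: the identical $\text{Bern}(q^j)+\text{Poiss}(\tau_j)$ decomposition of $N_j$, the geometric-series bound $\tau_j\leq\frac{1-p}{2p-1}f(\cdot)$ via monotonicity of $f$, and summing ${\bf P}^*(\omega(j)=0)$ to get ${\bf E}^*[K]$. The only (harmless) differences are that you spell out the thinning induction the paper leaves implicit, and you bound $\tau_j$ by $\frac{1-p}{2p-1}f(j-1)$ rather than $f(j)$, which costs you only the reindexing remark you already supply.
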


\begin{proof}
As a random variable (for $j\geq 1$)\begin{equation}\label{bern}N_j=\text{Bern}\Bigg(\Big(\frac{1-p}{p}\Big)^j\Bigg)+\text{Poiss}(\tau_j)\end{equation}where$$\tau_j=\sum_{i=1}^{j-1}\Big(\frac{1-p}{p}\Big)^{j-i}f(i)$$By an argument similar to the one employed in the proof of Proposition \ref{prop:neat}, we find that it follows from the fact $f$ is increasing that $\tau_j\leq\frac{1-p}{2p-1}f(j)\ \forall\ j$.  Combining this with \eqref{bern} establishes that\begin{equation}\label{prob-zero-eq}{\bf P}^*(\omega(j)=0)\geq \Big(1-\Big(\frac{1-p}{p}\Big)^j\Big)e^{-\frac{1-p}{2p-1}f(j)}\end{equation}Since $\Big(\frac{1-p}{p}\Big)^j\rightarrow 0$ as $j\rightarrow\infty$ and$${\bf E}^*[K]=\sum_{j=1}^{\infty}{\bf P}^*(\omega(j)=0)$$the proposition follows.
\end{proof}

\begin{prop}\label{prop:neat9}
$${\bf E}^*[K]=\infty\implies{\bf P}^*(K=\infty)=1$$
\end{prop}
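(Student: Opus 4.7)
The plan is to prove the contrapositive, closely paralleling the proof of Proposition~\ref{prop:neat1}. Assume ${\bf P}^*(K=\infty)<1$; I will show ${\bf E}^*[K]<\infty$. Since ${\bf P}^*(K<\infty)>0$, there exists $x\in\mathbb{Z}^+$, chosen large enough that $f(x)>0$, together with some $L\geq 1$, such that both ${\bf P}^*(\omega(x+1)=L)>0$ and ${\bf P}^*(U^*_{x+1}\mid\omega(x+1)=L)>0$, where $U^*_y:=\{\omega(j)\geq 1\text{ for all }j>y\}$ plays the role of $U_x$ from the continuous proof. Since $(N_{x+1}\mid N_x=1)$ has distribution $\mathrm{Bin}(1,\tfrac{1-p}{p})+\mathrm{Poiss}(\tfrac{1-p}{p}f(x))$ and $f(x)>0$, the one-step transition $1\to L$ carries positive probability. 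Invoking the Markov property at $x+1$, this yields $\alpha:={\bf P}^*(U^*_x\mid\omega(x)=1)>0$.

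Two monotone coupling arguments, both essentially identical to those used in Proposition~\ref{prop:neat1}, feed the main geometric bound. The first, driven by monotonicity of $f$ in the Poisson increments, shows that $j\mapsto{\bf P}^*(U^*_j\mid\omega(j)=1)$ is non-decreasing; the second, coupling in the initial count by splitting Binomial trials, shows ${\bf P}^*(U^*_j\mid\omega(j)=n)\geq{\bf P}^*(U^*_j\mid\omega(j)=1)$ for every $n\geq 1$. Together these give ${\bf P}^*(U^*_j\mid\omega(j)=n)\geq\alpha$ whenever $n\geq 1$ and $j\geq x$. The one new ingredient, absent in the continuous case, handles the passage out of state $0$: for $t\geq x$, conditional on $\omega(t)=0$, $\omega(t+1)\sim\mathrm{Poiss}(\tfrac{1-p}{p}f(t))$ is at least $1$ with probability $\geq\beta:=1-e^{-\frac{1-p}{p}f(x)}>0$. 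Combined with the previous bound, ${\bf P}^*(U^*_t\mid\omega(t)=0)\geq\alpha\beta$ for all $t\geq x$.

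With these estimates, set $K_x(\omega):=\#\{j>x:\omega(j)=0\}$ and let $\tau_T$ denote the position of the $T$-th such zero. By the strong Markov property at $\tau_T$, for every $t>x$,
$${\bf P}^*(K_x\geq T+1\mid\tau_T=t)=1-{\bf P}^*(U^*_t\mid\omega(t)=0)\leq 1-\alpha\beta,$$
so ${\bf P}^*(K_x\geq T+1\mid K_x\geq T)\leq 1-\alpha\beta$ for every $T\geq 1$. Summing the resulting geometric tail gives ${\bf E}^*[K_x]\leq 1/(\alpha\beta)<\infty$, whence ${\bf E}^*[K]\leq x+1/(\alpha\beta)<\infty$, which is the desired contrapositive.

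The principal obstacle is the extra factor $\beta$ appearing in the passage from $\omega(t)=0$ to $\omega(t+1)\geq 1$; this has no direct analogue in the continuous case, where $V$ counts moments at which $\omega$ jumps from $0$ to $1$ and so the process is already in state $1$ when the coupling bound $\alpha$ is invoked. Here $K$ counts visits to $0$ itself, so one additional upward transition must be inserted before the coupling can be applied, and we must choose $x$ large enough that $f(x)>0$ to ensure $\beta>0$.
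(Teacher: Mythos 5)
Your proof is correct and follows essentially the same route as the paper's: prove the contrapositive and bound ${\bf E}^*[K]$ by a geometric series using a uniformly positive escape probability from state $0$ that is monotone in $j$ (via coupling from the monotonicity of $f$). The paper's version is terser---it conditions directly on $\omega(L)=0$ and absorbs your factor $\beta$ into ${\bf P}^*(U_L\mid\omega(L)=0)$---but the extra details you supply (the Poisson step out of $0$ and domination in the initial count) are exactly the ones the paper leaves implicit.
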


\begin{proof}
Proceed by proving the contrapositive.  Let $U_j=\left\{\omega\in\Omega^*:\omega(i)>0\ \forall\ i>j\right\}$.  Assume ${\bf P}^*(K=\infty)<1$.  It will follow that $\exists\ L\geq 1$ such that ${\bf P}^*(U_L|\omega(L)=0)>0$.  Along with the fact that ${\bf P^*}(U_L|\omega(L)=0)$ is monotonically increasing (which follows from the fact that $f$ is increasing), this implies that ${\bf E}^*[K]-L$ can be bounded above by the sum of a geometric series with base $1-{\bf P}^*(U_L|\omega(L)=0)<1$.  The contrapositive of the proposition then follows, which establishes the proposition.
\end{proof}

\begin{prop}\label{prop:neat10}
$$\sum_{j=1}^{\infty}e^{-\frac{1-p}{2p-1}f(j)}<\infty\implies{\bf P}^*(K<\infty)=1$$
\end{prop}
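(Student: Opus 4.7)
The plan is to mirror the continuous-case argument of Proposition \ref{prop:neat4}, with the change-of-variables identity \eqref{impint} replaced by a simple telescoping bound. Set $\alpha:=\frac{1-p}{p}$ and $\beta:=\frac{1-p}{2p-1}$. Starting from the Bern-plus-Poisson decomposition \eqref{bern} for $N_j$, we have ${\bf P}^*(\omega(j)=0)=(1-\alpha^j)e^{-\tau_j}\leq e^{-\tau_j}$, so
$${\bf E}^*[K]=\sum_{j=1}^{\infty}(1-\alpha^j)e^{-\tau_j}\leq\sum_{j=1}^{\infty}e^{-\tau_j}.$$
Hence it suffices to prove the implication
$$\sum_{j=1}^{\infty}e^{-\beta f(j)}<\infty\implies\sum_{j=1}^{\infty}e^{-\tau_j}<\infty,$$
since then ${\bf E}^*[K]<\infty$ forces ${\bf P}^*(K<\infty)=1$ by Markov.

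The key identity, playing the role of $f(t)=\lambda_t+\lambda'_t$ from the continuous proof, comes from the one-step recursion $\tau_{j+1}=\alpha(\tau_j+f(j))$, which rearranges to
$$\beta f(j)=\tau_j+\frac{p}{2p-1}\,\Delta\tau_j,\qquad\text{where }\Delta\tau_j:=\tau_{j+1}-\tau_j.$$
A short induction on $j$ that uses $f$ being monotone yields $\Delta\tau_j\geq 0$ for every $j$, and the hypothesis $\sum_j e^{-\beta f(j)}<\infty$ forces $f(j)\to\infty$, hence $\tau_j\to\infty$ (via $\tau_{j+1}\geq\alpha f(j)$).

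The main step is then to split $\sum_j e^{-\tau_j}$ at the threshold $\Delta\tau_j=1$. On the set $\{\Delta\tau_j\leq 1\}$ the identity above gives
$$e^{-\tau_j}=e^{-\beta f(j)}\,e^{\frac{p}{2p-1}\Delta\tau_j}\leq e^{\frac{p}{2p-1}}\,e^{-\beta f(j)},$$
so this portion is controlled directly by the hypothesis. On the set $\{\Delta\tau_j>1\}$ we exploit the telescoping identity $\sum_{j\geq 1}(e^{-\tau_j}-e^{-\tau_{j+1}})=e^{-\tau_1}=1$, which is valid because $\tau_1=0$ and $\tau_j\to\infty$: since $e^{-\tau_j}-e^{-\tau_{j+1}}=(1-e^{-\Delta\tau_j})e^{-\tau_j}\geq(1-e^{-1})e^{-\tau_j}$ whenever $\Delta\tau_j>1$, summing yields the bound $\tfrac{e}{e-1}$. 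Adding the two pieces finishes the proof.

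I expect no substantive obstacle beyond bookkeeping. The telescoping bound on the large-increment portion replaces, and considerably simplifies, the change-of-variables gymnastics in \eqref{impint} of the continuous case; the monotonicity and divergence of $\tau_j$ are routine consequences of the hypothesis together with $f$ being monotonically increasing.
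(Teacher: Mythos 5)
Your proposal is correct and follows essentially the same route as the paper: the same reduction of ${\bf E}^*[K]$ to $\sum_j e^{-\tau_j}$, the same identity $\frac{1-p}{2p-1}f(j)=\tau_j+\frac{p}{2p-1}\Delta\tau_j$ from the one-step recursion, and the same split at $\Delta\tau_j=1$ with the large-increment part bounded by $\frac{e}{e-1}$. Your explicit telescoping justification of that last bound is just a cleaner writing of the paper's ``geometric series with base $e^{-1}$'' remark, so there is nothing substantively different to flag.
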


\begin{proof}
Since \eqref{bern} implies that ${\bf P^*}(\omega(j)=0)=e^{-\tau_j}\big(1-\big(\frac{1-p}{p}\big)^j\big)$, it follows that$${\bf E^*}[K]=\sum_{j=1}^{\infty}e^{-\tau_j}\Big(1-\Big(\frac{1-p}{p}\Big)^j\Big)<\sum_{j=1}^{\infty}e^{-\tau_j}$$Hence, to show that$$\sum_{j=1}^{\infty}e^{-\frac{1-p}{2p-1}f(j)}<\infty\implies{\bf E^*}[K]<\infty$$it suffices to show that\begin{equation}\label{sumimp1}\sum_{j=1}^{\infty}e^{-\frac{1-p}{2p-1}f(j)}<\infty\implies\sum_{j=1}^{\infty}e^{-\tau_j}<\infty\end{equation}To establish \eqref{sumimp1}, first note that
\begin{eqnarray*}
\tau_{j+1}-\tau_j & = & \Big(\frac{1-p}{p}\Big)^{j+1}\sum_{i=1}^j \Big(\frac{1-p}{p}\Big)^{-i}f(i)-\Big(\frac{1-p}{p}\Big)^j\sum_{i=1}^{j-1}\Big(\frac{1-p}{p}\Big)^{-i}f(i)\\ & = & \Big[\Big(\frac{1-p}{p}\Big)^{j+1}-\Big(\frac{1-p}{p}\Big)^j\Big]\sum_{i=1}^{j-1}\Big(\frac{1-p}{p}\Big)^{-i}f(i) +\frac{1-p}{p}f(j)\\ & = & \frac{1-2p}{p}\tau_j+\frac{1-p}{p}f(j)\implies\frac{1-p}{2p-1}f(j)=\frac{p}{2p-1}\Delta\tau_j+\tau_j
\end{eqnarray*}(where $\Delta\tau_j$ denotes $\tau_{j+1}-\tau_j$).  Hence,\begin{equation}\label{sumequ2}\sum_{j=1}^{\infty}e^{-\frac{1-p}{2p-1}f(j)}=\sum_{j=1}^{\infty}e^{-\tau_j}\cdot e^{-\frac{p}{2p-1}\Delta\tau_j}\end{equation}Now if the sum on the right in \eqref{sumimp1} is written as\begin{equation}\label{decomp1}\sum_{j=1}^{\infty}e^{-\tau_j}=\sum_{j=1}^{\infty}e^{-\tau_j}1_{(\Delta\tau_j\leq 1)}+\sum_{j=1}^{\infty}e^{-\tau_j}1_{(\Delta\tau_j>1)}\end{equation}then \eqref{sumequ2} and the left side of \eqref{sumimp1} imply that$$\sum_{j=1}^{\infty}e^{-\tau_j}\leq e^{\frac{p}{2p-1}}\sum_{j=1}^{\infty}e^{-\frac{1-p}{2p-1}f(j)}+\frac{e}{e-1}<\infty$$(where the $\frac{e}{e-1}$ term follows from the fact that the last sum on the right in \eqref{decomp1} can be bounded above by the sum of the geometric series with base $e^{-1}$).  Therefore, this establishes \eqref{sumimp1}, which implies ${\bf E^*}[K]<\infty$, from which it follows that ${\bf P^*}(V<\infty)=1$.  Hence, the proof is complete.
\end{proof}

\begin{proof}[Proof of Theorem \ref{theorem:neat7}]
The theorem is an immediate consequence of Propositions \ref{prop:neat8}-\ref{prop:neat10}.
\end{proof}

Theorem \ref{theorem:neat7} is now used to establish Theorem \ref{theorem:neat5}.  Note that on account of \eqref{inf-frogs}, establishing Theorem \ref{theorem:neat5} reduces to proving the following claim.

\bigskip
\noindent {\bf Claim:} For $f:\mathbb{Z}^+\rightarrow [0,\infty)$ monotonically increasing, the process $\left\{M_j\right\}$ dies out with probability 1 if and only if$$\sum_{j=1}^{\infty}e^{-\frac{1-p}{2p-1}f(j)}=\infty$$

\begin{proof}
By a coupling of $\left\{M_j\right\}$ with $\left\{N_j\right\}$, it follows from Theorem \ref{theorem:neat7} that if the above sum diverges, then $\left\{M_j\right\}$ dies out with probability 1.  For the other direction, we can apply an argument exactly like the one we used in the continuous case, but where we replace the integral with the sum and replace $\left\{X_t\right\}$ and $\left\{Y_t\right\}$ with $\left\{M_j\right\}$ and $\left\{N_j\right\}$ respectively.  Alongside the first part of the proof, this establishes Theorem \ref{theorem:neat5}.
\end{proof}

\medskip
\noindent $Remark\ 5.$ Much like with the continuous case, the result of Theorem \ref{theorem:neat5} extends to all functions $f:\mathbb{Z}^+\rightarrow[0,\infty)$ for which $\exists\ q\in\mathbb{Z}^+$ such that $f$ is increasing on $\left\{q,q+1,q+2,\dots\right\}$.  Due to its strong similarity to the argument given in $Remark\ 4$, the explanation for this is omitted.

\section{Counterexamples and additional comments}

\subsection{Counterexamples} In this final section I'll discuss a scenario in which $f:[0,\infty)\rightarrow[0,\infty)$ is not monotonically increasing, and the tight condition of Theorem \ref{theorem:neat3} ceases to hold.  A similar case for the discrete model is also mentioned.

\medskip
\begin{example}\label{example:2ndex}
Define $f:[0,\infty)\rightarrow [0,\infty)$ as$$f(t)=\left\{\begin{array}{ll}1 &\text{if }t\in[2^n,2^n +1)\ \text{for }n\in\mathbb{Z}^+\\ t &\text{otherwise}\end{array}\right.$$Since ${\bf E}[V]=\int_0^{\infty}e^{-\lambda_t}f(t)(1-e^{-t})dt$ (see \eqref{expVF}), to show that ${\bf E}[V]<\infty$ it suffices to show that $\int_0^{\infty}e^{-\lambda_t}f(t)dt<\infty$.  Recalling from \eqref{lambeq} that $\lambda_t=\int_0^t e^{-(t-u)}f(u)du$, we'll seek to achieve a lower bound for $\lambda_t$.  
Note first that if $n\in\mathbb{Z}^+$ then$$\int_{2^n+1}^{2^{n+1}}e^u f(u)du=\int_{2^n+1}^{2^{n+1}}ue^u du=\big(2^{n+1}-1\big)e^{2^{n+1}}-2^n\cdot e^{2^n+1}$$Hence, for $t=2^{n+1}$ (for $n\in\mathbb{Z}^+$)\begin{equation}\label{equstring}\lambda_t=e^{-t}\Big(\int_0^2 ue^u du+\sum_{j=1}^n\int_{2^j}^{2^j+1}e^u du+\sum_{j=1}^n\big(2^{j+1}-1\big)e^{2^{j+1}}-2^j\cdot e^{2^j+1}\Big)\geq e^{-t}\Big(\big(t-1\big)e^t-\frac{t}{2}e^{\frac{t}{2}+1}\Big)\geq t-2\end{equation}(where the last inequality follows from the fact that $\frac{t}{2}e^{\frac{t}{2}+1}<e^t$ for $t\geq 4$).  
Since $\lambda_{t+r}\geq e^{-r}\lambda_t$, it follows that for $0\leq r\leq 1$ (with $t=2^{n+1}$ as above) we have\begin{equation}\label{lambLB}\lambda_{t+r}\geq e^{-1}(t-2)\end{equation}Furthermore, note that if $t_o\in(t+1,2t)$ then $\lambda'_{t_o}$ exists (since $f$ is continuous in $(2^{n+1}+1,2^{n+2})$) with $\lambda'_{t_o}=f(t_o)-\lambda_{t_o}$.  
Since $\lambda_{t_o}\leq e^{-t_o}\int_0^{t_o}u e^u du=t_o-1+e^{-t_o}$, this means$$\lambda_{t_o}\leq t_o-e^{-1}\implies\lambda'_{t_o}=t_o-\lambda_{t_o}\geq e^{-1}$$ which along with \eqref{lambLB}, implies $\lambda_{t_o}\geq e^{-1}(t_o-2)$.  Combining this with \eqref{equstring} and \eqref{lambLB} then tells us that $\lambda_s\geq e^{-1}(s-2)\ \forall\ s\in [2^{n+1},2^{n+2})$ (for $n\in\mathbb{Z}^+$), and therefore $\lambda_s\geq e^{-1}(s-2)\ \forall\ s\in [4,\infty)$.  Using this inequality, along with the fact that $f(s)\leq s\ \forall\ s\in [0,\infty)$, we find that$$\int_0^{\infty}e^{-\lambda_t}f(t)dt\leq\int_0^4 t dt+\int_4^{\infty}e^{-e^{-1}(t-2)}t dt<\infty\implies{\bf E}[V]<\infty\implies{\bf P}(V<\infty)=1$$By the argument that was employed in Section 2.3 to establish the implication ${\bf P}(V<\infty)=1\implies\left\{\text{non-transience}\right\}$, it follows that for the given Poisson intensity function $f$ (with drift $\frac{1}{2}$) the model is non-transient.  Noting now that $$\int_0^{\infty}e^{-f(t)}f(t)dt\geq\sum_{j=1}^{\infty}\int_{2^j}^{2^j+1}e^{-1}dt=\sum_{j=1}^{\infty}e^{-1}=\infty$$we find that the tight condition from Theorem \ref{theorem:neat3} does indeed fail to apply in this case.
\end{example}

\medskip
\noindent $Remark\ 6.$ Notice that the tight condition of Theorem \ref{theorem:neat3} also fails to hold when $f:[0,\infty)\rightarrow[0,\infty)$ is a bounded (nonzero) function such that $\int_0^{\infty}f(x)dx<\infty$ (since then $\int_0^{\infty}e^{-f(x)}f(x)dx<\infty$, but the model is transient).  However, if the integral in \eqref{int-cond} is changed to $\int_0^{\infty}e^{-f(x)}(1+f(x))dx$, then \ref{theorem:neat3} remains valid, yet functions in $L^1([0,\infty))$ that are bounded, nonzero, and nonnegative, no longer violate the new condition.  Hence, such functions offer far less insight into the limits to which the result of Theorem \ref{theorem:neat3} can be stretched, than does the case examined in Example \ref{example:2ndex}.

\medskip
\begin{example}\label{example:3rdex}
Define $f:\mathbb{Z}^+\rightarrow\mathbb{N}$ as$$f(j)=\left\{\begin{array}{ll}1 &\text{if }j=2^n\ \text{for }n\in\mathbb{Z}^+\\ j &\text{otherwise}\end{array}\right.$$It follows from \eqref{bern} that in order to show that ${\bf E^*}[K]<\infty$ it suffices to show that $\sum_{j=1}^{\infty}e^{-\tau_j}<\infty$ (with $\tau_j$ defined as in Section 3.2).  From the formulas for $\tau_j$ and $f$ we see that $\tau_j\geq\Big(\frac{1-p}{p}\Big)^2(j-2)\ \forall\ j\geq 1$ (recall $\frac{1}{2}<p<1$).  Hence, $$\sum_{j=1}^{\infty}e^{-\tau_j}<\infty\implies{\bf E^*}[K]<\infty\implies{\bf P^*}(K<\infty)=1$$As we saw in the proofs of Theorems \ref{theorem:neat3} and \ref{theorem:neat5}, this implies non-transience of the model.  Combining this with the fact that$$\sum_{j=1}^{\infty}e^{-\frac{1-p}{2p-1}f(j)}\geq\sum_{j=1}^{\infty}e^{-\frac{1-p}{2p-1}f(2^j)}=\sum_{j=1}^{\infty}e^{-\frac{1-p}{2p-1}}=\infty$$we see that the tight condition of Theorem \ref{theorem:neat5} does not apply in this case.
\end{example}

\section*{Acknowledgements}

The author would like to thank Toby Johnson for providing extensive background on the frog model; thanks also to Marcus Michelen for helpful conversations and technical assistance.

\end{document}